\newtheorem{theorem}{Theorem}[section]
\newtheorem{corollary}[theorem]{Corollary}
\newtheorem{lemma}[theorem]{Lemma}
\newtheorem{proposition}[theorem]{Proposition}
\theoremstyle{definition}
\newtheorem{definition}[theorem]{Definition}
\theoremstyle{remark}
\newtheorem{remark}[theorem]{Remark}
\numberwithin{equation}{section}
\newcommand{\Q}{\mathbb{Q}}
\newcommand{\Z}{\mathbb{Z}}
\newcommand{\ds}{\displaystyle}
\newcommand{\ov}{\overline}
\newcommand{\ft}{\footnotesize}
\newcommand{\ns}{\normalsize}
\newcommand{\plus}{\ds\mathop{\raise 0.5pt \hbox{$\bigoplus$}}\limits}
\newcommand{\prd}{\ds\mathop{\raise 1.0pt \hbox{$\prod$}}\limits}
\newcommand{\sm}{\ds\mathop{\raise 1.0pt \hbox{$\sum$}}\limits}
\newcommand{\order}{\raise0.8pt \hbox{${\scriptstyle \#}$}}
\newcommand{\lien}{\mathrel{\mkern-4mu}}
\newcommand{\too}{\relbar\lien\rightarrow}
\newcommand{\tooo}{\relbar\lien\relbar\lien\too}
\newcommand{\ffrac}[2]{\hbox{\ft $\displaystyle\frac{#1}{#2}$}}
\newcommand{\Nu}{\hbox{\Large $\nu$}}
\newcommand{\Kappa}{\hbox{\large $\kappa$}}
\newcommand{\BKappa}{\hbox{\Large $\kappa$}}
\newcommand{\Gal}{{\rm Gal}}
\newcommand{\Ker}{{\rm Ker}}
\newcommand{\Coker}{{\rm Coker}}
\newcommand{\Hom}{{\rm H}}
\newcommand{\ab}{{\rm ab}}
\newcommand{\ar}{{\hspace{1 pt}\rm ar}}
\newcommand{\alg}{{\hspace{1 pt}\rm alg}}
\newcommand{\Norm}{{\bf N}}
\newcommand{\BJ}{{\bf J}}
\newcommand{\Bj}{{\bf j}}
\newcommand{\BH}{{\bf H}}
\newcommand{\BE}{{\bf E}}
\newcommand{\BF}{{\bf F}}
\newcommand{\CE}{{\mathcal E}}
\newcommand{\CF}{{\mathcal F}}
\newcommand{\CH}{{\mathcal H}}
\newcommand{\CG}{{\mathcal G}}
\newcommand{\CA}{{\mathcal A}}
\newcommand{\CB}{{\mathcal B}}
\newcommand{\CX}{{\mathcal X}}
\begin{document}

\title[On the non semi-simple Real Abelian Main Conjecture]
{On the Real Abelian Main Conjecture \\ in the non semi-simple case}

\author{Georges Gras}

\address{Franche--Comt\'e--Besan\c con University (in retirement)
4, chemin de Ch\^ateau Gagni\`ere,
F-38520 Le Bourg d'Oisans  
 \url{http://orcid.org/0000-0002-1318-4414}}
\email{g.mn.gras@wanadoo.fr}

\subjclass{11R20, 11R42, 20C15}

\date{June 16, 2023}

\keywords{Main Conjecture, $p$-class groups, $p$-adic characters, 
cyclic $p$-extensions, capitulation of classes, non-semi simple algebras}

\begin{abstract}
Let $K/\Q$ be a real cyclic extension of degree divisible by $p$. 
We analyze the {\it statement} of the ``Real Abelian Main Conjecture'',
for the $p$-class group $\CH_K$ of $K$, in this non semi-simple case. 
The classical {\it algebraic} definition of the $p$-adic isotopic components
$\CH^\alg_{K,\varphi}$, for irreducible $p$-adic characters $\varphi$,
is inappropriate with respect to analytical formulas, because of 
capitulation of $p$-classes in the $p$-sub-extension of $K/\Q$. 
In the 1970's we have given an {\it arithmetic} definition,
$\CH^\ar_{K,\varphi}$, and formulated the conjecture, still unproven,  
$\order \CH^\ar_{K,\varphi} = \order (\CE_K / \CE^\circ_K \, \CF_{\!K})_{\varphi_0}$, 
in terms of units $\CE_K$ then $\CE^\circ_K$ (generated by units of the strict subfields
of $K$) and cyclotomic units $\CF_K$, where $\varphi_0$ is the tame 
part of~$\varphi$. We prove that the conjecture holds as soon as there exists a prime $\ell$, 
totally inert in $K$, such that $\CH_K$ capitulates in $K(\mu_\ell^{})$, existence 
having been checked, in various circumstances, as a promising new tool. 
\end{abstract}

\maketitle

\vspace{-0.8cm}
\tableofcontents

\section{Introduction and preliminary remarks} 

Let $K/\Q$ be a real cyclic extension of Galois group $\CG_K \simeq 
\gamma \oplus \Gamma$, $\gamma$ of prime-to-$p$ order $d$, $\Gamma \simeq 
\Z/p^e \Z$, $e \geq 1$, and let $k$ be the subfield of $K$ fixed by $\Gamma$. 
Let $\CH_K$ be the $p$-class group of $K$, as 
$\Z_p[\CG_K]$-module and let $E_K$ (resp. $F_K$) be the group of units
(resp. of Leopoldt's cyclotomic units) of $K$ and set $\CE_K = E_K \otimes \Z_p$,
$\CF_{\!K} = F_K \otimes \Z_p$. We consider $\CE^\circ_K = E^\circ_K \otimes \Z_p$,
where $E^\circ_K$ is the subgroup of $E_K$ generated by the units of the
strict subfields of $K$.

\smallskip
The ``Real Abelian Main Conjecture'' (RAMC for short) writes:
\begin{equation}\label{RAMC}
\order \CH^\ar_{K,\varphi} = \order (\CE_K / \CE^\circ_K \, \CF_{\!K})_{\varphi_0},\ \,
\varphi = \varphi_0^{} \varphi_p
\end{equation}
($\varphi_p$ of $p$-power order, $\varphi_0^{}$, of prime-to-$p$ order, is called the 
tame part of the irreducible $p$-adic character $\varphi$ of $K$);  
RAMC claims analytic expressions of orders 
of suitable $p$-adic isotopic components $\CH^\ar_{K,\varphi}$ of the $p$-class group 
of $K$ and was first stated in the papers \cite{Gra1976} (1976), \cite{Gra1977$^a$} (1977) 
(especially for \textit {the non semi-simple case}); the semi-simple case 
$p \nmid [K : \Q]$, claiming $\order \CH_{K,\varphi} = \order (\CE_K / \CF_{\!K})_\varphi$,
was the purpose of \cite{Gra1977$^b$} (1977). These conjectures were presented at the 
meeting ``Journ\'ees arithm\'etiques de Caen'' (1976) as it is mentioned 
for instance in a paper of Solomon \cite{Sol1990} (1990), in a survey of Ribet 
\cite{Rib2008} (2008) and in Washington's book \cite[\S\,15.3]{Was1997} (1997). 
The non semi-simple RAMC is still unproven, contrary to some affirmations in the literature. 

\medskip\noindent
{\bf Main result} (Theorem \ref{relfond}).
The RAMC \eqref{RAMC} holds for $K$ as soon as there exists a prime number 
$\ell \equiv 1\! \pmod {2p^N}$, totally inert in $K$, such that $\CH_K$ capitulates 
in the auxiliary $p$-sub-extension of $K(\mu_\ell^{})/K$.

\medskip
The classical {\it algebraic} definition of the $p$-adic isotopic components, in the form 
$\CH^\alg_{K,\varphi} := \CH_K \ \hbox {$\otimes^{}_{\Z_p[\CG_K]}$} \Z_p[\mu_{d p^e}^{}]$ 
with obvious Galois action, is inappropriate with respect to analytic formulas, because of 
frequent capitulations of $p$-classes in the $p$-sub-extension $K/k$ of $K/\Q$. In our 
previous works, we have given, for any rational character $\chi$ of $K$ and 
$\varphi \mid \chi$, an {\it arithmetic} definition of these $\chi$ and $\varphi$-components, 
denoted $\CH^\ar_{K,\chi}$ and $\CH^\ar_{K,\varphi}$, giving the expected formulas 
$\order \CH^\ar_{K,\chi} = \prod_{\varphi \mid \chi} \order \CH^\ar_{K,\varphi}$ (Theorem
\ref{isotopicphi}). 
For convenience, we will refer to the english translation of \cite{Gra1976} (1976) 
given in \cite{Gra2021} (2021). 

\smallskip
The RAMC has been proven in some semi-simple cases by means of an impressive 
series of articles (beginning with the geometrical methods of Ribet, Mazur--Wiles, Rubin,
then ending by means of Kolyvagin's Euler Systems from Thaine's arithmetic 
approach), then in the non semi-simple case for {\it relative class groups} of 
imaginary abelian fields by Solomon and Greither (in the imaginary case, 
capitulation phenomena of ``minus class groups'' do not exist and the two 
definitions of the $p$-adic isotopic components coincide).

\smallskip
Finally, in the Iwasawa theory context, non semi-simple by nature, a specific Main 
Conjecture has been proved, in terms of $p$-adic $L$-functions, and is also at the origin of 
many articles (to avoid considerable bibliography, we refer to that given in \cite{Gra2021}; 
a main overview on the story, precise proofs and classical references are given in 
Washington's book \cite[Chapters 6, 8, 13, 15, Notes]{Was1997}). The 
fundamental difference, regarding finite $p$-extensions, is that capitulation 
kernels are hidden in statements using pseudo-isomorphisms
of $\Z_p[[T]]$-modules, whence only giving results for 
the projective limit of the $p$-class groups in the $\Z_p$-extensions and, in 
general, no precise information is available in the finite layers (it's quite clear
in a numerical setting that any possible structure occurs in the first layers, up
to the algebraic regularity predicted by Iwasawa's theory; see for instance the
numerical computations given in \cite{KS1995,Paga2022}).
Nevertheless, in the real case, Greenberg's conjecture \cite{Gree1976}
(saying that $\lambda = \mu = 0$) makes it somewhat unnecessary.
Indeed, if $k$ is totally real and $K = k_\infty = \bigcup_{n \geq 0} k_n$ the 
cyclotomic $\Z_p$-extension of $k$, then $\lambda = \mu = 0$ is equivalent to the
``stability'' of the class groups $\CH_{k_n}$ in the tower, from some layer $n_0 \gg 0$, 
in which case, the arithmetic norms $\Norm_{k_{n+1}/k_n} : \CH_{k_{n+1}} \to \CH_{k_n}$ 
are isomorphisms, the transfer maps $\BJ_{k_\infty/k_n} : 
\CH_{k_n} \to \CH_{k_\infty}$ being never injective since any $\CH_{k_n}$ capitulates 
in $k_\infty$ (see, e.g., \cite[Theorem 1.2, Remark 1.3]{Gra2023} and Grandet--Jaulent 
\cite[Th\'eor\`eme, p. 214]{GrJa1985} proving that $\CH_{k_n} \simeq 
\Ker(\BJ_{{k_\infty}/k_n}) \bigoplus \BJ_{{k_\infty}/k_n}(\CH_{k_n})$ for $n \gg 0$). 

\smallskip
Thus, in the first layers, the $\order \CH_{k_n}$'s are increasing in some random way 
and the RAMC makes sense in these layers, up to the stabilization from 
$k_{n_0}$, where, for all $p$-adic character $\varphi_n$ of $k_n$, $\varphi_n = 
\varphi_{n,0} \varphi_{n,p}$, the relation $\order \CH^\ar_{k_n,\varphi_n} = 
\order(\CE_{k_n} / \CE^\circ_{k_n}\, \CF_{\!k_n})_{\varphi_{n,0}}$ holds 
for all $n \geq 0$ and becomes constant from $n_0$, if RAMC is true.
As we have explained, \textit {the case of even $p$-adic characters in a non 
semi-simple context}, was less understood because of a problematic definition of the 
isotopic components and also of cyclotomic units; the more conceptual case of Iwasawa's 
theory has been privileged and generalized in many directions in the framework of Euler's systems.

\section{Abelian extensions and characters}
Let $\CG$ be the Galois group of the maximal abelian extension $\Q^\ab$ of $\Q$ and
let $K$ be any subfield of finite degree of $\Q^\ab$; put $\CG_K = \Gal(K/\Q)$.

\subsection{Abelian characters}
Let $\Psi$ be the set of irreducible characters of $\CG$, of degree $1$ and finite order, 
with values in an algebraic closure $\ov \Q_p$. We define the set of irreducible $p$-adic characters 
$\Phi$, for a prime $p \geq 2$, the set $\CX$ of irreducible rational characters
and the subsets of irreducible characters $\Psi_K$, $\Phi_K$, $\CX_K$, of $K$.
The notation $\psi \mid \varphi \mid \chi$ (for $\psi \in \Psi$, $\varphi \in \Phi$,
$\chi \in \CX$) means that $\varphi$ is a term of $\chi$ and $\psi$ a term of $\varphi$.
The set $\CX$ has the following elementary property to be considered as the 
``Main Theorem'' for rational components \cite{Leo1954,Leo1962}: 

\begin{theorem} \label{chiformula}
Let $K/\Q$ be an abelian extension and let 
$(A_\chi)_{\chi \in \CX_K}$, $(A'_\chi)_{\chi \in \CX_K}$
be two families of positive numbers, indexed by the set $\CX_K$ 
of irreducible rational characters of $K$. If for all 
subfields $k$ of $K$, one has
$\prod_{\chi \in \CX_k} A'_\chi = \prod_{\chi \in \CX_k} A_\chi$,
then $A'_\chi = A_\chi$ for all $\chi \in \CX_K$.
\end{theorem}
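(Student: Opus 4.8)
The plan is to exploit the structural correspondence, due to Leopoldt, between irreducible rational characters and cyclic subfields, and then to run a short induction over the subfield lattice. First I would recall that to each $\chi \in \CX_K$ one attaches the subfield $k_\chi \subseteq K$ fixed by $\Ker(\chi)$; this field is cyclic over $\Q$, the character $\chi$ is the unique faithful irreducible rational character of $\Gal(k_\chi/\Q)$, and, viewing characters of subfields by inflation, one has $\chi \in \CX_k$ if and only if $\Gal(K/k) \subseteq \Ker(\chi)$, i.e. $k_\chi \subseteq k$. Thus $\chi \mapsto k_\chi$ is a bijection from $\CX_K$ onto the set of cyclic subfields of $K$, and for any subfield $k$ the family $\CX_k$ is exactly $\{\chi \in \CX_K : k_\chi \subseteq k\}$.

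Next I would introduce the ratios $B_\chi := A'_\chi / A_\chi > 0$; the goal becomes $B_\chi = 1$ for every $\chi \in \CX_K$, and the hypothesis rewrites as $\prod_{\chi \in \CX_k} B_\chi = 1$ for every subfield $k$ of $K$. The decisive observation is that it suffices to test this identity on the cyclic subfields $F = k_\chi$ themselves: since every subfield of a cyclic field is again cyclic, for such an $F$ the set $\CX_F$ is the full set of subfields of $F$, that is $\{\chi' \in \CX_K : k_{\chi'} \subseteq F\}$, having the faithful character $\chi$ as its unique maximal element.

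Then I would argue by strong induction on the degree $[k_\chi : \Q]$, a finite integer since $\CX_K$ is finite. For the base case $k_\chi = \Q$, corresponding to the trivial character $\chi_0$, specializing the hypothesis to $k = \Q$ gives $A'_{\chi_0} = A_{\chi_0}$, i.e. $B_{\chi_0} = 1$. For the inductive step, let $F = k_\chi$ be cyclic and assume $B_{\chi'} = 1$ for all $\chi'$ with $k_{\chi'} \subsetneq F$; these are precisely the elements of $\CX_F$ other than $\chi$. Applying the hypothesis with $k = F$ then yields
\[
B_\chi \cdot \!\!\prod_{\substack{\chi' \in \CX_F \\ \chi' \neq \chi}}\!\! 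B_{\chi'} = 1 ,
\]
and since every factor of the product equals $1$ by the inductive hypothesis, we conclude $B_\chi = 1$.

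I do not expect a genuine obstacle here: once the character--field dictionary is in place, the argument is purely combinatorial, the relation at each cyclic $F$ telescoping to a single new equation. The only point requiring care is the verification that $\chi \mapsto k_\chi$ identifies $\CX_K$ with the cyclic subfields of $K$ and that $\CX_F$ is closed under this correspondence for cyclic $F$; this is exactly the content of the ``Main Theorem for rational components'' of Leopoldt cited above, which is what lets the induction over the subfield lattice close at every step.
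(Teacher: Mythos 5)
Your argument is correct. The paper itself does not prove Theorem \ref{chiformula}; it only states it as an elementary property and refers to Leopoldt \cite{Leo1954,Leo1962}. Your proof is the standard one implicitly behind that citation: the bijection $\chi \mapsto k_\chi$ between irreducible rational characters and cyclic subfields (with $\CX_k = \{\chi : k_\chi \subseteq k\}$ and the faithful character as unique maximal element of $\CX_F$ for $F$ cyclic), followed by strong induction on $[k_\chi:\Q]$ using the hypothesis only at the cyclic subfields $F = k_\chi$. All the supporting facts you invoke (uniqueness of the faithful rational irreducible character of a cyclic group, properness of subfields forcing strictly smaller degree) are correct, so the induction closes as you describe; nothing is missing.
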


The interest of this property is that analytic formulas (giving for instance 
orders $A_K$ of some finite $p$-adic invariants $\CA_K$ of abelian fields $K$) 
may be {\it canonically} decomposed under the form $A_K = \prod_{\chi \in \CX_K} A_\chi$,
to be compared with algebraic formula $\order \CA_K = \prod_{\chi \in \CX_K} 
\order \CA_\chi$ for suitable canonical $\Z_p[\CG_K]$-modules $\CA_\chi$, so that 
$\order \CA_\chi = A_\chi$ for all $\chi$; the RAMC being the same statement, 
replacing rational characters $\chi$ by $p$-adic ones $\varphi$, under the existence of natural 
relations $\order \CA_\chi = \prod_{\varphi \mid \chi} \order \CA_\varphi$
and $A_\chi = \prod_{\varphi \mid \chi} A_\varphi$.
Unfortunately, classical algebraic definitions of the $\CA_\varphi$'s does not
fulfill these equalities in the non semi-simple case $\order \CG_K \equiv 0 \pmod p$
as we have shown in \cite{Gra2022,Gra2023}.

\smallskip
Noting that the fixed field $K_\chi$ of $\Ker(\chi)$ is cyclic, there is no restriction 
to assume $K/\Q$ real cyclic in all the sequel, then such that $K = K_\chi$.

\subsection{Algebraic versus Arithmetic isotopic components}
Let $K/\Q$ be a real cyclic extension of Galois group $\CG_K \simeq 
\gamma \oplus \Gamma$, $\gamma$ of prime-to-$p$ order $d$,
$\Gamma \simeq \Z/p^e \Z$, $e \geq 1$. Let $k := K^{\Gamma}$ and $K_0 := K^\gamma$.

\subsubsection{Characters of \texorpdfstring{$K$}{Lg}}\label{characters}
Let $\chi$ be the rational character defining $K$. The field of values of $\psi \mid \chi$ 
is $\Q(\mu_{dp^e})$, direct compositum $\Q(\mu_d) \Q(\mu_{p^e})$; thus 
$\psi = \psi_0 \cdot \psi_p$, $\psi_0$ of order $d$,  $\psi_p$ of order $p^e$ and
$\chi = \chi_0 \cdot \chi_p$, $\chi_0 \in \CX_k$ above $\psi_0$, $\chi_p \in \CX_{K_0}$ 
above $\psi_p$. Similarly, in the direct compositum $\Q_p(\mu_d) \Q_p(\mu_{p^e})$,
irreducible $p$-adic characters $\varphi \mid \chi$ are of the form $\varphi_0 \cdot \varphi_p$, 
$\varphi_p = \chi_p$ since $\Gal(\Q_p(\mu_{p^e})/\Q_p)\simeq \Gal(\Q(\mu_{p^e})/\Q)$.

\smallskip
Whence $\Psi_K = \Psi_k \!\cdot\! \Psi_{K_0}$, 
$\Phi_K = \Phi_k \!\cdot\!  \Phi_{K_0}$, $\CX_K = \CX_k \!\cdot\!  \CX_{K_0}$. In the 
writing $\varphi = \varphi_0 \!\cdot\!  \varphi_p$, $\varphi_0$ will be called the 
tame part of~$\varphi$.
Any $\rho \in \CX_K$ corresponds to $K_\rho$, cyclic of degree $d' p^{e'}$, 
$d' \mid d$ and $e' \leq e$; so, any result for $\chi$ and $\varphi \mid \chi$, 
holds in the same way for $\rho$ and its $p$-adic characters. Thus, in general, 
we will state results only for $\chi$.

\subsubsection{Definitions of the isotopic components}
The non semi-simple context is problematic for the definition of isotopic 
$p$-adic components in the form $\CH_{K,\varphi}$, $\varphi \mid \chi$, since
$\CH_K =  \oplus_{\rho \in \CX_K} \oplus_{\varphi \mid \rho}\CH_{K,\varphi}$ 
does not make sense in general.
Let's recall the definitions and explain how the phenomenon of capitulation 
gives rise to difficulties about the classical algebraic definition.

\smallskip
For $\CG_K = \gamma \oplus \Gamma$ cyclic of order $d p^e$, $e \geq 1$, 
and for all $\varphi \mid \chi$, the first classical definition is the following, 
where $P_\chi$ is the $d p^e$th cyclotomic polynomial, $\sigma_\chi$ a 
generator of $\CG_K$ and $P_\varphi \mid P_\chi$ the local cyclotomic 
polynomial associated to the action $\tau \in \CG_K \mapsto \psi (\tau)$,
$\psi \mid \varphi$ and $P_\varphi (\psi (\tau)) = 0$:
\begin{equation}\label{hat}
\widehat \CH^\alg_{K,\varphi} := \CH_K \ \hbox {$\bigotimes^{}_{\Z_p[\CG_K]}$} \ 
\Z_p[\mu_{d p^e}^{}] \simeq \CH_K/P_\varphi (\sigma_\chi) \!\cdot\! \CH_K,
\end{equation}
as $\Z_p[\mu_{d p^e}^{}]$-module (see, e.g., Solomon \cite[II, \S\,1]{Sol1990}, 
Greither \cite[Definition, p. 451]{Grei1992} or Mazigh \cite[Introduction]{Mazigh2017}). 
Another algebraic definition, giving $\Z_p[\mu_{d p^e}^{}]$-modules, is that of
the kernels of the actions of $P_\chi (\sigma_\chi)$ and $P_\varphi (\sigma_\chi)$;
it defines what we will call the notions of {\it algebraic} $\chi$ and $\varphi$-objects:
\begin{equation}\label{normal}
 \hspace{0.55cm}\left \{\begin{aligned}
\CH^\alg_{K,\chi} :=  & \ \{x \in \CH_K, \ P_\chi (\sigma_\chi)  \!\cdot\! x = 1 \}  \\
\CH^\alg_{K,\varphi} := &\ \{x \in \CH_K, \ P_\varphi (\sigma_\chi)  \!\cdot\! x = 1\}.
\end{aligned}\right.
\end{equation}

From such an algebraic definition of the $\chi$-objects $\CH^\alg_{K,\chi}$ in \eqref{normal}, 
we have proved in \cite[\S\,3.2.4, Theorem 3.7, Definition 3.11]{Gra2021}, the  
interpretation:
$$\CH^\alg_{K,\chi} = \{x \in \CH_K, \ \Nu_{\!K/\Kappa}(x) = 1,\, 
\forall \, \BKappa \varsubsetneqq K \}, $$

\noindent
where $\Nu_{\!K/\Kappa} = \sum_{\sigma \in \Gal(K/\Kappa)} \sigma$ is the algebraic 
norm in $K/\Kappa$.

\smallskip
This gives rise to our {\it arithmetic} definitions of $\chi$ and $\varphi$-objects, 
replacing $\Nu_{\!K/\Kappa}$ by the arithmetic norm $\Norm_{K/\Kappa}$, 
noting that $\Nu_{\!K/\Kappa} = \BJ_{\!K/\Kappa} \circ \Norm_{K/\Kappa}$, 
where $\BJ_{\!K/\Kappa}$ is the transfer map (or extension of classes
deduced of that of ideals):
\begin{equation}\label{maindef}
\left \{\begin{aligned}
\CH^\ar_{K,\chi} := &\  \{x \in \CH_K,\ \,\Norm_{K/\Kappa}(x) = 1,\, 
\forall \, \BKappa \varsubsetneqq K \}, \\
\CH^\ar_{K,\varphi} := & \  \{x \in \CH_K, \ \, \Norm_{K/\Kappa}(x) = 1,\, 
\forall \, \BKappa \varsubsetneqq K
\, \ \& \ \, P_\varphi (\sigma_\chi)  \!\cdot\! x = 1 \} 
\end{aligned}\right.
\end{equation}

The above definitions of $\chi$-objects lead to an unexpected semi-simplicity, in accordance 
with analytic formulas and the non semi-simple RAMC \cite[Theorem 4.5]{Gra2021}:

\begin{theorem}\label{isotopicphi}
Let $K/\Q$, $K = K_\chi$, $\chi =: \chi_0 \chi_p$, be a real cyclic extension of Galois 
group $\CG_K \simeq \gamma \oplus \Gamma$, with $\gamma$ of prime-to-$p$ order $d$,
and $\Gamma \simeq \Z/p^e \Z$, $e \geq 1$.
Let $\varphi \mid \chi$, $\varphi =: \varphi_0\cdot \varphi_p$, $\varphi_0 \mid \chi_0$ 
(see \S\,\ref{characters}) and put $e_{\varphi_0} := \frac{1}{d} 
\sum_{\tau \in \gamma} \varphi_0(\tau^{-1}) \, \tau$.
If the $\Z_p[\CG_K]$-module $\CA^\ar_{K,\chi}$ is a $\chi$-object, then 
$\CA^\ar_{K,\chi} = \bigoplus_{\varphi \mid \chi} \CA^\ar_{K,\varphi}$, where 
$\CA^\ar_{K,\varphi} := (\CA^\ar_{K,\chi})^{e_{\varphi_0}}$, 
also denoted $(\CA^\ar_{K,\chi})_{\varphi_0}$ (cf. definitions \eqref{maindef}).
\end{theorem}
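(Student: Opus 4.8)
The plan is to show that, although $p \mid \order \CG_K$, the decomposition into $\varphi$-components is governed \emph{entirely} by the tame part $\gamma$ (of order $d$ prime to $p$), the wild character $\chi_p$ remaining $\Q_p$-irreducible and hence not splitting any further. First I would record that, since $\CA^\ar_{K,\chi}$ is a $\chi$-object, $P_\chi(\sigma_\chi)$ annihilates it, so the $\Z_p[\CG_K]$-action factors through
\[
A := \Z_p[\CG_K]/\bigl(P_\chi(\sigma_\chi)\bigr) \;\simeq\; \Z_p[X]/\bigl(P_\chi(X)\bigr) \;\simeq\; \Z_p[\mu_{dp^e}^{}],
\]
the last map sending $\sigma_\chi \mapsto \zeta_{dp^e}$; in particular $\mu_d \subset A$ via $\sigma_\chi^{p^e} \mapsto \zeta_d$. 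Fixing $\psi \mid \varphi$ as in \S\,\ref{characters} identifies each value $\varphi_0(\tau^{-1}) \in \mu_d$ with an element of $A$, so that $e_{\varphi_0} = \frac{1}{d}\sum_{\tau \in \gamma}\varphi_0(\tau^{-1})\,\tau$ is a bona fide element of $A$ (recall $\frac{1}{d} \in \Z_p$, as $p \nmid d$).

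Next I would invoke the semi-simplicity of $\Z_p[\gamma]$: over $\Z_p[\mu_d]$ the characters $\varphi_0$ of $\gamma$ furnish orthogonal idempotents $e_{\varphi_0}$ summing to $1$, and the $\Z_p[\mu_d]$-algebra structure map $\Z_p[\mu_d][\gamma]\to A$ carries them to orthogonal idempotents of $A$. Since $A$ is $\chi_0$-isotypic for the $\gamma$-action -- its eigenvalues are precisely the primitive $d$-th roots $\Q_p$-conjugate to $\zeta_d$, i.e.\ those attached to the $\varphi_0 \mid \chi_0$ -- the idempotents with $\varphi_0 \nmid \chi_0$ map to $0$, while $\sum_{\varphi_0 \mid \chi_0} e_{\varphi_0} = 1$ in $A$. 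Applying this to the module yields at once
\[
\CA^\ar_{K,\chi} \;=\; \bigoplus_{\varphi \mid \chi} e_{\varphi_0}\,\CA^\ar_{K,\chi} \;=\; \bigoplus_{\varphi \mid \chi} \CA^\ar_{K,\varphi},
\]
which is the asserted direct sum with $\CA^\ar_{K,\varphi} = (\CA^\ar_{K,\chi})^{e_{\varphi_0}}$.

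Finally I would reconcile this with the kernel description in \eqref{maindef}. The factorisation $P_\chi = \prod_{\varphi \mid \chi} P_\varphi$ has pairwise coprime factors over $\Z_p$: reducing modulo $p$, the congruence $\zeta_{p^e} \equiv 1$ gives $\overline{P_\varphi} = \overline{P_{\varphi_0}}^{\,\phi(p^e)}$, with $P_{\varphi_0}$ the $\Q_p$-irreducible factor of $P_{\chi_0}$ (the $d$th cyclotomic polynomial) attached to $\varphi_0$; since $P_{\chi_0}$ is separable modulo $p$ (as $p \nmid d$), distinct $\varphi_0$ produce coprime $\overline{P_{\varphi_0}}$, hence coprime $\overline{P_\varphi}$. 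Consequently $P_\varphi(\sigma_\chi)$ kills $e_{\varphi_0}\CA^\ar_{K,\chi}$ and acts invertibly on every $e_{\varphi_0'}\CA^\ar_{K,\chi}$ with $\varphi_0' \neq \varphi_0$, so that $e_{\varphi_0}\CA^\ar_{K,\chi} = \{x \in \CA^\ar_{K,\chi} : P_\varphi(\sigma_\chi)\!\cdot\!x = 1\}$, matching the arithmetic $\varphi$-object.

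I expect the one genuinely delicate point to be exactly this persistence of coprimality over $\Z_p$ in spite of $p \mid \order\CG_K$: it hinges on the tame/wild separation together with the $\Q_p$-irreducibility of $\chi_p$, which together confine all the non-semi-simplicity to a part of $\CG_K$ that takes no part in the $\varphi$-splitting. Everything else is then formal idempotent bookkeeping over the semi-simple algebra $\Z_p[\gamma]$.
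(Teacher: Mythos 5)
Your proof is correct and follows the route the result is meant to rest on: the paper does not reprove Theorem \ref{isotopicphi} here (it is quoted from \cite[Theorem 4.5]{Gra2021}), and your chain --- annihilation by $P_\chi(\sigma_\chi)$, decomposition under the tame idempotents $e_{\varphi_0}$ (legitimate since $p\nmid d$), vanishing of the components with $\varphi_0\nmid\chi_0$ because the $\gamma$-eigenvalues on the quotient algebra are primitive $d$th roots of unity, the bijection $\varphi\leftrightarrow\varphi_0$ furnished by the $\Q_p$-irreducibility of $\chi_p$, and the mod-$p$ coprimality of the $P_\varphi$ (separability of the $d$th cyclotomic polynomial mod $p$) to recover the kernel description of \eqref{maindef} --- is exactly the ``semi-simplicity through the tame part'' mechanism, so the delicate point you single out is indeed the only one. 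Two phrasings should be repaired, though neither impairs the argument: $\Z_p[X]/(P_\chi(X))$ is the \emph{product} of the local rings $\Z_p[X]/(P_\varphi(X))$, $\varphi\mid\chi$, not $\Z_p[\mu_{dp^e}^{}]$ itself unless $P_\chi$ is $\Q_p$-irreducible (your last paragraph in fact proves this product decomposition, so the opening isomorphism should be stated that way); and $\varphi_0(\tau^{-1})$ is a trace, hence lies in $\Z_p$, so $e_{\varphi_0}$ is already an idempotent of $\Z_p[\gamma]$ with no identification of $\mu_d$ inside $A$ needed --- correspondingly, the eigenvalues occurring are \emph{all} the primitive $d$th roots of unity (those attached to all $\varphi_0\mid\chi_0$), not only those $\Q_p$-conjugate to a fixed $\zeta_d$, which is what your own conclusion $\sum_{\varphi_0\mid\chi_0}e_{\varphi_0}=1$ on the module actually uses.
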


For $\CA_K = \CH_K$, since $K/k$ is totally ramified, arithmetic norms $\Norm_{L/K}$ 
are surjective (class field theory); so we have the following relations (not fulfilled by the 
$\CH_{K_\rho,\rho}^\alg$'s because of possible capitulations leading to non surjective 
$\Nu_{K/\Kappa}$'s; see \S\,\ref{capitulations} for some numerical examples):

\begin{corollary}\label{relation}
We have $\order \CH_K = \prd_{\rho \in \CX_K} \order \CH^\ar_{K_\rho,\rho} =
\prd_{\rho \in \CX_K}\prd_{\varphi \mid \rho}\order \CH^\ar_{K_\rho,\varphi}$.
\end{corollary}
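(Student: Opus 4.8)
The plan is to treat the two equalities separately. The right-hand equality is immediate from Theorem~\ref{isotopicphi}: for each $\rho \in \CX_K$ the module $\CH^\ar_{K_\rho,\rho}$ is, by its very definition \eqref{maindef}, a $\rho$-object, so $\CH^\ar_{K_\rho,\rho} = \bigoplus_{\varphi \mid \rho}\CH^\ar_{K_\rho,\varphi}$ and the orders multiply; taking the product over $\rho$ turns the middle term into the right-hand one. Everything thus reduces to the first (product) equality $\order \CH_K = \prd_{\rho \in \CX_K}\order \CH^\ar_{K_\rho,\rho}$, which I would establish by a telescoping argument rather than by a direct global count. Writing $\CG_K \simeq \gamma \oplus \Gamma$ with $\chi$ the faithful character of $K=K_\chi$, the idea is to cut $\CH_K$ into its tame components and to climb the resulting chains of cyclic $p$-extensions one step at a time.

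For the mechanism, I would first use that $p \nmid d = \order \gamma$: by Maschke's theorem $\Z_p[\gamma]$ is semi-simple, so $\CH_K = \bigoplus_{\varphi_0} e_{\varphi_0}\CH_K$ decomposes along the tame characters (with $e_{\varphi_0}$ as in Theorem~\ref{isotopicphi}), compatibly with the $\Z_p[\gamma]$-linear arithmetic norms. On a fixed component $e_{\varphi_0}\CH_K$, the subfields $\Kappa$ not containing the field cut out by $\varphi_0$ annihilate it, while the remaining relevant subfields form a \emph{chain} of cyclic $p$-extensions, because $\Gamma$ is a cyclic $p$-group. Hence on that component the condition ``$\Norm_{K/\Kappa}(x)=1$ for all $\Kappa \varsubsetneqq K$'' collapses, via $\Norm_{K/\Kappa} = \Norm_{K'/\Kappa}\circ \Norm_{K/K'}$, to the single condition $\Norm_{K/K'}(x)=1$ for the chain-predecessor $K'$; that is, $e_{\varphi_0}\CH^\ar_{K,\chi} = \Ker(\Norm_{K/K'})$ on the component. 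Since $K/k$ is totally ramified, $\Norm_{K/K'}$ is surjective by class field theory, giving a short exact sequence $1 \to e_{\varphi_0}\CH^\ar_{K,\chi} \to e_{\varphi_0}\CH_K \to e_{\varphi_0}\CH_{K'}\to 1$ and hence $\order e_{\varphi_0}\CH_K = \order e_{\varphi_0}\CH^\ar_{K,\chi}\cdot \order e_{\varphi_0}\CH_{K'}$. Iterating this down each chain telescopes $\order e_{\varphi_0}\CH_K$ into a product of orders of successive norm-kernels, each of which is the arithmetic $\rho$-object of the corresponding intermediate field $K_\rho$; reassembling over all tame components $\varphi_0$ then matches the pairs (component, chain level) with the rational characters $\rho \in \CX_K$, and Theorem~\ref{isotopicphi} reconciles the $p$-adic indexing $\varphi_0$ with the rational indexing. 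This yields $\order \CH_K = \prd_{\rho \in \CX_K}\order \CH^\ar_{K_\rho,\rho}$.

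The main obstacle is the composite, non prime-power, degree: there $K$ has several maximal subfields $K^{(n/q)}$, the set $\CX_K \setminus \{\chi\}$ is their union \emph{with overlaps}, and a naive induction would force an inclusion--exclusion (alternating) product of class numbers together with a Koszul-type exactness of the several simultaneous norm maps. The semi-simple reduction over the prime-to-$p$ part $\gamma$ is exactly what dissolves this: it replaces the subfield lattice seen by each component with a chain possessing a unique predecessor, so that one surjective arithmetic norm at each step suffices. It is also precisely here that the \emph{arithmetic} definition is indispensable: the algebraic norms $\Nu_{K/\Kappa} = \BJ_{K/\Kappa}\circ \Norm_{K/\Kappa}$ need not be surjective, since the transfer $\BJ_{K/\Kappa}$ may fail to be injective (capitulation), which is why the analogous product formula breaks down for the $\CH^\alg_{K_\rho,\rho}$. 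Leopoldt's Theorem~\ref{chiformula} is not needed for the product identity itself, but it underlies the canonical per-character interpretation in the RAMC framework, guaranteeing that the factors $\order \CH^\ar_{K_\rho,\rho}$ are the unique rational components of $\order\CH_K$.
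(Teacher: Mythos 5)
Your reduction of the second equality to Theorem \ref{isotopicphi} is fine, and your general strategy for the first equality (semi-simple decomposition over the tame characters, telescoping along cyclic $p$-chains with surjective arithmetic norms, identification of the successive norm-kernels with the arithmetic objects) is indeed the mechanism behind the paper's one-line justification of the corollary. But your key structural claim --- that on a fixed component $e_{\varphi_0}\CH_K$ the subfields not containing the field cut out by $\varphi_0$ give vacuous conditions while ``the remaining relevant subfields form a chain'' --- is only correct when $\varphi_0$ is \emph{faithful} on $\gamma$, i.e.\ when the field cut out by $\varphi_0$ is $k$ itself. For a non-faithful $\varphi_0$ (including $\varphi_0=1$) the subfields of $K$ containing the field $k'\subsetneq k$ cut out by $\varphi_0$ form a two-dimensional sublattice (composita of the subfields of $k$ above $k'$ with the subfields of $K_0=K^\gamma$), not a chain; moreover, on such a component the norm conditions to the maximal subfields of prime-to-$p$ index $q$ on whose order-$q$ subgroup $\varphi_0$ is trivial are \emph{not} automatic: since the transfer is injective in prime-to-$p$ degree and the algebraic norm acts there as the unit $q$, these conditions force $x=1$. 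Hence your identity $e_{\varphi_0}\CH^\ar_{K,\chi}=\Ker(\Norm_{K/K'})$ on the component is false for non-faithful $\varphi_0$ (the left side is trivial, the right side in general is not), and the telescoping as you describe it --- always starting from $K$ and descending to ``the chain-predecessor'' --- only produces the factors $\order\CH^\ar_{K_\rho,\rho}$ for the subfields $K_\rho$ containing $k$. The factors attached to subfields with smaller tame part are never generated, so the asserted matching of pairs (component, chain level) with $\CX_K$ does not come out of the argument as written.

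The missing ingredient is the prime-to-$p$ reduction. For a tame rational character $\chi_0$ of $k$ cutting out $k'\subseteq k$, one first checks that $\Norm_{K/k'K_0}$ (an extension of degree $[K:k'K_0]$ prime to $p$) is an isomorphism $e_{\chi_0}\CH_K\simeq e_{\chi_0}\CH_{k'K_0}$: the algebraic norm $\Nu_{K/k'K_0}$ acts on that component as the unit $[K:k'K_0]$ and the transfer $\BJ_{K/k'K_0}$ is injective. After this identification, your chain argument applies verbatim to the chain of fields $k'F$, $F$ running over the subfields of $K_0$ (from $k'$ up to $k'K_0$), on whose tame part $\chi_0$ \emph{is} faithful; each successive kernel is then the $\chi_0$-part of the arithmetic object of the corresponding field, and the product over all $\chi_0\in\CX_k$ and all levels is genuinely indexed by $\CX_K$. (Equivalently, one may run an induction over all subfields of $K$, applying your faithful-component step at each stage.) With that step supplied, your proof becomes the standard argument the paper relies on; note the paper itself only records the surjectivity of the arithmetic norms and Theorem \ref{isotopicphi}, the detailed verification being delegated to the earlier references.
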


\subsubsection{Practical characterization of the 
\texorpdfstring{$\CH^\ar_{K,\varphi}$}{Lg}'s} \label{remafond}
The computation of:
$$\CH^\ar_{K,\varphi} := \  \{x \in \CH_K, \ \, \Norm_{K/\Kappa}(x) = 1,\, 
\forall \, \BKappa \varsubsetneqq K \ \ \& \ \ P_\varphi (\sigma_\chi)  \!\cdot\! x = 1 \}, $$
may be obtained in two commutable steps, from Theorem \ref{isotopicphi}:

\smallskip\noindent
(i) {\bf Computation of $\CH^\ar_{K,\chi}$.}
So, $\CH^\ar_{K,\chi} = \{x \in \CH_K,\, \Norm_{K/\Kappa}(x) = 1,\, \forall \, 
\BKappa \varsubsetneqq K \}$ is the group of ``totally relative classes''.
Let $\ov K$ be the unique subfield of $K$ such that $[K : \ov K] = p$; then
the conditions $\Norm_{K/\Kappa'}(x) = 1$ for the subfields $\BKappa'$, such that 
$K^\gamma \subseteq \BKappa' \varsubsetneqq K$, is automatically fulfilled for the 
$\chi_0$-component of the kernel, $\CH^\ar_{K/\ov K}$, of 
$\Norm_{K/\ov K} : \CH_K \to \CH_{\ov K}$, thus giving for $\chi = \chi_0 \chi_p$:
\begin{equation}\label{relative}
\left \{\begin{aligned}
\CH^\ar_{K/\ov K} & := \{x \in \CH_K,\, \Norm_{K/\ov K}(x) = 1\}, \\
\CH^\ar_{K,\chi} & := (\CH^\ar_{K/\ov K})^{e_{\chi_0}} =: \CH^\ar_{K/\ov K,\chi_0},
\end{aligned}\right.
\end{equation}
where $e_{\chi_0} \in \Z_p[\gamma]$ is the usual semi-simple idempotent.

\smallskip\noindent
(ii) {\bf Computation of $\CH^\ar_{K,\varphi}$.}
From Theorem  \eqref{isotopicphi} and the semi-simple decomposition 
$e_{\chi_0} = \sum_{\varphi_0 \mid \chi_0} e_{\varphi_0}$, one gets, for
$\varphi = \varphi_0 \varphi_p$, $\varphi_0 \mid \chi_0$:
\begin{equation}\label{relativephi}
\left \{\begin{aligned}
\CH^\ar_{K/\ov K} & := \{x \in \CH_K,\, \Norm_{K/\ov K}(x) = 1\}, \\ 
\CH^\ar_{K,\varphi} & =  (\CH^\ar_{K/\ov K})^{e_{\varphi_0}} =:
\CH^\ar_{K/\ov K,\varphi_0}.
\end{aligned}\right.
\end{equation}

It is important to note that $\CH^\ar_{K,\varphi} = \{x \in \CH_{K,\varphi_0},\ 
\Norm_{K/\ov K}(x) = 1 \}$.

\smallskip\noindent
(iii) {\bf Analytic objects}.
The $\Z_p[\CG_K]$-module $\CE_K /\CE^\circ_K \, \CF_{\!K}$ is a $\chi$-object 
since $\Norm_{K/\Kappa}(\CE_K) \subseteq \CE^\circ_K$, for all 
$\BKappa \varsubsetneqq K$. Whence, from (ii):
$$(\CE_K /\CE^\circ_K \, \CF_{\!K})_\varphi = 
(\CE_K /\CE^\circ_K \, \CF_{\!K})^{e_{\varphi_0}} =:
(\CE_K /\CE^\circ_K \, \CF_{\!K})_{\varphi_0}. $$

\section{Class groups of real abelian fields -- Analytic formulas}\label{secIII}
Denote by $\BE_K := \vert E_K \vert$ the group of absolute value of the units of $K$, the 
Galois action being defined by $\vert \varepsilon \vert^\sigma = \vert \varepsilon^\sigma \vert$ 
for any unit $\varepsilon$ and any $\sigma \in \CG_K$.

\subsection{The Leopoldt cyclotomic units}\label{subIII2} 
This aspect being very classical, we just recall the definitions
(see Leopoldt \cite[\S\,8\ (1)]{Leo1954}, \cite{Leo1962}, or Washington \cite[Chap. 8]{Was1997}).

\begin{definition}\label{defIII3} 
(i) Let $\rho \in \CX_K \setminus \{1\}$ be of conductor $f_\rho$, let
$\zeta_{2f_\rho} := \exp \Big(\ffrac{i \pi}{f_\rho} \Big)$, and put
$\theta_\rho := \prod_{a \in A_\rho} (\zeta_{2f_\rho}^a - \zeta_{2f_\rho}^{-a})$,  
where $A_\rho \subset (\Z/ f_\rho \Z)^\times$ is a half-system of repre\-sentatives 
corresponding to $\Gal \big(\Q(\zeta_{2f_\rho} + \zeta_{2f_\rho}^{-1})/K_\rho \big)$. 

\smallskip
(ii) Let $\BF_K$ be the intersection with $\BE_K$ of the multiplicative group generated 
by the conjugates of the $\vert \theta_\rho \vert$'s, for $\rho \in \CX_K \setminus \{1\}$. 
This defines the group of cyclotomic units of $K$ and we put 
$\CE_K := \BE_K \otimes \Z_p$, $\CF_{\!K} := \BF_K \otimes \Z_p$. 

\smallskip
(iii) For the cyclic real field $K$ of degree $d p^e$, let's denote by $\BE^\circ_K$ the 
subgroup of $\BE_K$ generated by the $\BE_{\Kappa}$'s for all the subfields 
$\BKappa \varsubsetneqq K$.
\end{definition}

\subsection{Arithmetic computation of 
\texorpdfstring{$\order \CH^\ar_{K,\chi}$}{Lg}}\label{subIII3}

Since the RAMC is trivial for characters $\chi$ of $p$-power order ($\varphi = \chi$), 
we assume $d = [k : \Q] >1$ and $\chi = \chi_0 \chi_p$, $\chi_0 \ne 1$. 
In that case, interpretation of Leopoldt's formulas \cite{Leo1954,Leo1962} yields, 
in the spirit of Theorem \ref{chiformula} (see \cite[pp. 71--75]{Gra1976} for  
details and proofs, or \cite[Section 7, \S\S\,7.1--7.3]{Gra2021} for an overview):
\begin{theorem}\label{chiformulaH}
Let $\CH^\ar_{K,\chi} := \{x \in \CH_K,\,  
\Norm_{K/\Kappa}(x) = 1,\ \hbox{for all $\BKappa \varsubsetneqq K$} \}$. Then
$\order \CH^\ar_{K,\chi} =  \big (\CE_K\! : \CE^\circ_K \, \CF_{\!K} \big)$, where
$\CE^\circ_K$ is the subgroup of $\CE_K$ generated by the $\CE_{\Kappa}$'s for 
all the subfields $\BKappa \varsubsetneqq K$.
\end{theorem}

Our purpose is to obtain $\order \CH^\ar_{K,\varphi} =  
\order (\CE_K / \CE^\circ_K\, \CF_{\!K} )_{\varphi_0}$, for all $\varphi_0 \mid \chi_0$,
which constitutes the RAMC. Such a result will be obtained if there exists an
auxiliary cyclotomic extension $K(\mu_\ell^{})$, $\ell$ inert in $K$, in which $\CH_K$ 
capitulates, a trick showing that some natural properties ``above $K$'' are 
logically overhead the arithmetic of the base field.

\begin{remark}\label{e0} 
{\rm The analytic formula given by this theorem, which appears to have been less known, 
seems more convenient than formulas using other groups $\CF'_K$ of cyclotomic units. 
Indeed, compare with \cite[Theorem 4.14]{Grei1992} using instead {\it algebraic 
isotopic components} in a partial semi-simple statement saying that for all
$\varphi_0 \in \Phi_k$, $\order (\CE_K / \CF'_K)_{\varphi_0} = 
\order \CH_{K,\varphi_0}$, up to an explicit power of $2$ and an index 
$(\Z[\CG_K] : U)$ given in the framework of Sinnott's group $\CF'_K$ of cyclotomic units
larger than classical Leopoldt's group of Definition \ref{defIII3}.}
\end{remark}

\subsection{Norm properties of Leopoldt's cyclotomic units}

 Let $f >1$ and let $m \mid f$, with $m>1$, be any modulus; let 
$\Q(\zeta_m) \subseteq \Q(\zeta_f)$ be the corresponding 
cyclotomic fields with $\zeta_t := \exp \big(\frac{2 i \pi}{t}  \big)$, for all $t \geq 1$.  
Put $\eta^{}_f := 1- \zeta_f$, $\eta^{}_m := 1- \zeta_m$. It is well known
that $\Norm_{\Q(\zeta_f)/\Q(\zeta_m)} (\eta^{}_f) = 
 \eta_m^\Omega$, with $\Omega = {\prod_{\ell \mid f,\ \ell \nmid m}
\big(1-\big(\frac{\Q(\zeta_m)}{\ell} \big)^{-1}\big)}$,
where $\big(\frac{\Q(\zeta_m)}{\ell} \big) \in \Gal(\Q(\zeta_m)/\Q)$ denotes the Frobenius 
(or Artin) automorphism of the prime number $\ell \nmid m$ (see, e.g., \cite[\S\,4.2]{Gra2022}).
Applying this result to our context $L/K/\Q$ yields:

\begin{proposition}\label{cycloformula}\label{invertible}
Let $m$ be the conductor of $K$ and let $L \subset K(\mu_\ell)$, 
$\ell$ being totally 
inert in $K$; the conductor $f$ of $L$ is $\ell m$. Set $\eta^{}_L := 
\Norm_{\Q^f/L}(\eta^{}_f)$ and $\eta^{}_K := \Norm_{\Q^m/K}(\eta^{}_m)$;
then, $\Norm_{L/K} (\eta^{}_L) = \eta_K^\Omega,\ \,\hbox{with \,$\Omega = 
1-\big(\frac{K}{\ell} \big)^{-1}$}$, and $\Omega e_{\varphi_0}$ is invertible in 
$\Z_p[\CG_K]e_{\varphi_0}$, for all $\varphi_0 \in {\bf \Phi}_k \setminus \{1\}$. 
Then $\Norm_{L/K}(\CF_{\!L,\varphi_0}) = \CF_{\!K,\varphi_0}$ with the 
Definition \ref{defIII3} of cyclotomic units.
\end{proposition}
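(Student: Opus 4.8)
The plan is to establish the three assertions in turn, the first two being formal and the third resting on a structural property of Leopoldt's units.

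First I would prove the norm formula by transitivity of norms. Since $\ell$ is totally inert in $K$ it is in particular unramified, so $\ell \nmid m$, $f = \ell m$ with $\gcd(\ell,m)=1$, and one has the towers $K \subseteq L \subseteq K(\mu_\ell) \subseteq \Q^f$ and $K \subseteq \Q^m \subseteq \Q^f$, with $\Q^f = \Q^m(\mu_\ell)$. The relation recalled before the statement, applied with $\ell$ the unique prime dividing $f$ but not $m$, gives $\Norm_{\Q^f/\Q^m}(\eta_f) = \eta_m^{\,\Omega_m}$ with $\Omega_m = 1 - \big(\frac{\Q^m}{\ell}\big)^{-1}$. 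Transitivity in the two towers then yields
\[ \Norm_{L/K}(\eta_L) = \Norm_{\Q^f/K}(\eta_f) = \Norm_{\Q^m/K}\big(\eta_m^{\,\Omega_m}\big). \]
As $\Gal(\Q^m/\Q)$ is abelian, $\big(\frac{\Q^m}{\ell}\big)$ commutes with $\Norm_{\Q^m/K}$ and restricts on $K$ to $\big(\frac{K}{\ell}\big)$; since $\Norm_{\Q^m/K}(\eta_m) = \eta_K \in K$, the right-hand side equals $\eta_K^{\,\Omega}$ with $\Omega = 1 - \big(\frac{K}{\ell}\big)^{-1} \in \Z_p[\CG_K]$, as asserted.

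Next I would show the invertibility of $\Omega\,e_{\varphi_0}$. Fix $\varphi_0 \in \Phi_k \setminus \{1\}$ and put $\CR := \Z_p[\CG_K]\,e_{\varphi_0}$. Because $d = \order \gamma$ is prime to $p$, $\Z_p[\gamma]\,e_{\varphi_0}$ is the unramified ring $\Z_p[\varphi_0]$ generated by the values of $\varphi_0$, whence $\CR \simeq \Z_p[\varphi_0][\Gamma]$; since $\Gamma \simeq \Z/p^e\Z$ is a $p$-group, $\CR$ is local, its maximal ideal being the kernel of the composite of the augmentation $\CR \to \Z_p[\varphi_0]$ ($\sigma \mapsto 1$ for $\sigma \in \Gamma$) with reduction modulo $p$. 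Decomposing $\big(\frac{K}{\ell}\big) = \sigma_0\,\sigma_p$ along $\CG_K = \gamma \oplus \Gamma$ and using $\sigma_0^{-1} e_{\varphi_0} = \varphi_0(\sigma_0)^{-1} e_{\varphi_0}$, I get $\Omega\,e_{\varphi_0} = \big(1 - \varphi_0(\sigma_0)^{-1}\sigma_p^{-1}\big)e_{\varphi_0}$, whose image under that composite is the class of $1 - \varphi_0(\sigma_0)^{-1}$. The totally inert hypothesis means the Frobenius $\big(\frac{K}{\ell}\big)$ generates the cyclic group $\CG_K$, hence $\sigma_0$ generates $\gamma$ and $\varphi_0(\sigma_0) \neq 1$ for $\varphi_0 \neq 1$; as reduction modulo $p$ is injective on prime-to-$p$ roots of unity, $1 - \varphi_0(\sigma_0)^{-1}$ is a unit of $\Z_p[\varphi_0]$, so $\Omega\,e_{\varphi_0}$ avoids the maximal ideal of the local ring $\CR$ and is invertible.

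Finally I would deduce the surjectivity onto cyclotomic units. By Definition \ref{defIII3} and the rank-one description of real cyclotomic units, for $\varphi_0 \neq 1$ the isotopic components are cyclic: $\CF_{K,\varphi_0} = \CR\cdot \eta_K^{\,e_{\varphi_0}}$ and $\CF_{L,\varphi_0} = \CR_L \cdot \eta_L^{\,e_{\varphi_0}}$, where $\CR_L := \Z_p[\Gal(L/\Q)]\,e_{\varphi_0}$ surjects onto $\CR$ under restriction. Since $\Norm_{L/K}(x^\sigma) = \Norm_{L/K}(x)^{\sigma|_K}$, the image $\Norm_{L/K}(\CF_{L,\varphi_0})$ is the $\CR$-module generated by $\Norm_{L/K}(\eta_L^{\,e_{\varphi_0}})$; projecting the first assertion to the $\varphi_0$-component gives $\Norm_{L/K}(\eta_L^{\,e_{\varphi_0}}) = \big(\eta_K^{\,e_{\varphi_0}}\big)^{\Omega e_{\varphi_0}}$, and since $\Omega e_{\varphi_0}$ is a unit of $\CR$ by the second assertion, this generator spans the same $\CR$-module as $\eta_K^{\,e_{\varphi_0}}$. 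Hence $\Norm_{L/K}(\CF_{L,\varphi_0}) = \CR\cdot\eta_K^{\,e_{\varphi_0}} = \CF_{K,\varphi_0}$. The main obstacle is precisely the structural input invoked at the start of this last step: that for $\varphi_0 \neq 1$ the $\varphi_0$-isotopic component of the Leopoldt group is the cyclic $\CR$-module spanned by the top-conductor unit $\eta_K^{\,e_{\varphi_0}}$ (equivalently, that the cyclotomic units attached to proper subfields are absorbed after applying $e_{\varphi_0}$). This is the delicate bookkeeping specific to the real, non semi-simple setting; everything else reduces to transitivity of norms and a one-variable computation in the local ring $\CR$.
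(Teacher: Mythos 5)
Your first two steps are correct and are essentially the paper's own argument: the norm relation is the displayed formula for $\Norm_{\Q(\zeta_f)/\Q(\zeta_m)}$ pushed down by transitivity, and your locality computation in $\Z_p[\CG_K]e_{\varphi_0}\simeq \Z_p[\varphi_0][\Gamma]$ is the same as the paper's evaluation of $1-\tau_{\ell,K}$ at the characters $\psi_0\psi_p$ (the common key point being that $1-\psi_0(\tau)$ is a $p$-adic unit because the tame part of the Frobenius generates $\gamma$ and $d$ is prime to $p$). The genuine gap is exactly the ``structural input'' you flag in the last step, and it is not merely unproven: the claim $\CF_{K,\varphi_0}=\CR\cdot\eta_K^{e_{\varphi_0}}$ is false in general in the non semi-simple case. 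By Definition \ref{defIII3}, $\CF_{K,\varphi_0}$ (for $\varphi_0\ne 1$) is generated over $\CR$ by the components $\eta_{K'}^{e_{\varphi_0}}$ of \emph{all} the intermediate fields $k\subseteq K'\subseteq K$, and these are not absorbed by the top-conductor unit. Concretely, take $p=3$, $k$ real quadratic of conductor $f$ and $K_0$ cubic of prime conductor $q$ with $q$ split in $k$ (the paper's numerical case (i): $f=229$, $q=37$): the norm relation applied to $K/k$ (where $q$ divides the conductor of $K$ but not that of $k$) gives $\Norm_{K/k}(\eta_K)=\eta_k^{\,1-\big(\frac{k}{q}\big)^{-1}}=1$. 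If one had $\eta_k^{e_{\varphi_0}}=\eta_K^{\alpha}$ with $\alpha\in\CR$, applying $\Nu_{K/k}=\BJ_{K/k}\circ\Norm_{K/k}$ to both sides would give $\eta_k^{\,p^e e_{\varphi_0}}=\big(\Norm_{K/k}(\eta_K)\big)^{\alpha}=1$; but $\CE_{k}=\CE_{k,1}\oplus\CE_{k,\varphi_0}$ with $\CE_{k,1}=1$, so $\eta_k^{e_{\varphi_0}}=\eta_k$ is of infinite order in the torsion-free module $\CE_K$, a contradiction. Hence $\eta_k^{e_{\varphi_0}}\in\CF_{K,\varphi_0}\setminus\CR\cdot\eta_K^{e_{\varphi_0}}$, and your final step collapses precisely in the situation (non-trivial intermediate layers, primes splitting in subfields) that the proposition is designed to handle.

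The statement is nevertheless true, and the paper's proof shows how to dispense with cyclicity: no single generator is needed. Since $\CF_{K,\varphi_0}$ only depends on the cyclotomic units of the fields $k\subseteq K'\subseteq K$, it suffices to recover each $\eta_{K'}^{e_{\varphi_0}}$ as a norm. For such a $K'$ put $L'=K'M_0=L\cap K'(\mu_\ell)$; since $\ell$ is totally inert in $K$ it is totally inert in $K'$, so your second step applies verbatim to $\Omega'=1-\big(\frac{K'}{\ell}\big)^{-1}$ and shows $\Omega' e_{\varphi_0}$ is invertible; moreover the restriction $\Gal(L/K)\to\Gal(L'/K')$ is an isomorphism (because $L=KL'$ and $L'\cap K=K'$), so $\Norm_{L/K}$ agrees with $\Norm_{L'/K'}$ on $L'$. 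Therefore $\eta_{K'}^{e_{\varphi_0}}=\Norm_{L/K}\big(\eta_{L'}^{\,\beta' e_{\varphi_0}}\big)$ with $\beta' e_{\varphi_0}$ the inverse of $\Omega' e_{\varphi_0}$, and every generator of $\CF_{K,\varphi_0}$ lies in $\Norm_{L/K}(\CF_{L,\varphi_0})$; the inclusion $\Norm_{L/K}(\CF_{L,\varphi_0})\subseteq\CF_{K,\varphi_0}$ follows from the standard norm (distribution) relations. In short: keep your steps one and two, but run step three field-by-field through the intermediate layers $k\subseteq K'\subseteq K$ rather than through a single top generator, which in general does not exist.
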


\begin{proof}
Indeed, $\tau^{}_{\ell,K} := \big(\frac{K}{\ell} \big)^{-1}$ generates $\CG_K$
since $\ell$ is inert in $K/\Q$; write $\tau^{}_{\ell,K} =: \tau  \sigma$, $\tau$
of order $d$, $\sigma$ of order $p^e$. So for $\psi_0 \mid \varphi_0 \ne 1$, 
$\psi_0 \big(1-\tau \big) = 1 - \psi_0\big(\tau \big)$ is a unit of 
$\Q(\mu_d^{})$ if $\xi := \psi_0 \big (\tau \big)$ is not of prime power order, 
otherwise, $\big(1 - \xi \big)$ is a prime ideal above a prime $q \mid d$ and 
$1 - \xi$ is a $p$-adic unit. Whence, $\psi_0 \big (1-\tau_{\ell,K}\big) =
1 - \xi +\xi (1-\sigma)$; thus $1-\tau_{\ell,K}$ in invertible in $\Z_p[\CG_K]$ 
since $\psi_p(1-\sigma)$ is an uniformizing parameter at $p$.
Since $\ell$ is totally inert in $K/\Q$, this applies to any subfield $K'$ 
of $K$, of character $\chi'= \chi'_0 \chi'_p$ with $\chi'_0 \ne 1$ and
its $p$-adic characters $\varphi'_0 \ne 1$, with $L' = L \cap K'(\mu_\ell^{})$ 
and $\Omega' = 1-\big(\frac{K'}{\ell} \big)^{-1}$. 

\smallskip
The module $\CF_{\!K,\varphi_0}$, $\varphi_0 \mid \chi_0$, only depends 
on the cyclotomic units of the fields $K'$ of characters $\chi'= \chi_0 \chi_p^{p^j}$, 
$0 \leq j \leq e$, that is to say, $k \subseteq K' \subseteq K$.

\smallskip
Whence the norm relation between the $\varphi_0$-components of the Leopoldt 
cyclotomic units for any $\varphi_0 \ne 1$; indeed, Leopoldt's definition \ref{defIII3}
writes $\theta_{f} = \zeta_{2f} - \zeta_{2f}^{-1} = - \zeta_{2f}^{-1}(1 - \zeta_{2f}^2) =
- \zeta_{2f}^{-1}(1 - \zeta_{f}) = - \zeta_{2f}^{-1} \eta^{}_f$ and norms are taken over 
real fields $L$ and $K$.
\end{proof}

\section{The \texorpdfstring{$p$}{Lg}-localization of the Chevalley--Herbrand formula}

Let $K/\Q$ be a cyclic extension of Galois group $\CG_K = \gamma \oplus \Gamma$, 
$\gamma$ of prime-to-$p$ order $d$ and $\Gamma \simeq \Z/p^e \Z$, $e \geq 1$;
let $k = K^\Gamma$, $K_0 = K^\gamma$. Let $M_0/\Q$ be a real cyclic extension 
of degree $p^n$, $n \geq 1$, linearly disjoint from $K$, and let $L= M_0 K$; so any 
prime ideal of $K$, ramified in $L/K$, is totally ramified (this will be the principle 
for the proof of the RAMC, only using $M_0 \subset \Q(\mu_\ell^{})$, 
$\ell \equiv 1 \pmod {2 p^N}$, $N \geq n$, $\ell$ inert in $K/\Q$, under
capitulation of $\CH_K$ in $L$). 
\unitlength=0.40cm
$$\vbox{\hbox{\hspace{-2.0cm} 
\vspace{-0.2cm}
\begin{picture}(11.5,6.4)
\put(5.5,6){\line(1,0){6}}
\put(4.0,4){\line(1,0){5.5}}
\put(5.4,2.50){\line(1,0){4.25}}
\put(10.1,2.50){\line(1,0){1.3}}
\put(3.5,0.50){\line(1,0){6}}
\bezier{350}(3.3,1.)(3.9,1.55)(4.5,2.1)
\bezier{350}(3.3,4.5)(3.9,5.05)(4.5,5.6)
\bezier{350}(10.3,1.)(10.9,1.55)(11.5,2.1)
\bezier{350}(10.3,4.5)(10.9,5.05)(11.5,5.6)
\put(3.0,1.0){\line(0,1){2.5}}
\put(5.0,2.9){\line(0,1){0.98}}
\put(5.0,4.2){\line(0,1){1.4}}
\put(10.0,1.0){\line(0,1){2.5}}
\put(11.8,3.0){\line(0,1){2.5}}
\put(2.75,0.3){\ft$\Q$}
\put(4.75,2.3){\ft$k$}
\put(9.75,0.35){\ft$K_0$}
\put(10.9,1.2){\ft$d$}
\put(11.5,2.3){\ft$K$}
\put(2.2,2.2){\ft$p^n$}
\put(7.9,2.7){\ft$p^e$}
\put(2.6,3.8){\ft$M_0$}
\put(4.7,5.8){\ft$M$}
\put(9.75,3.8){\ft$L_0$}
\put(11.5,5.8){\ft$L$}
\put(7.9,6.15){\ft$\Gamma$}
\put(10.9,4.7){\ft$\gamma$}
\put(12.0,4.1){\ft$G$}
\put(6.0,5.2){\ft$\CG_K$}
\bezier{200}(3.8,4.2)(7.1,5.4)(11.3,5.8)
\end{picture}   }} $$
\unitlength=1.0cm

Under the previous assumptions, the Chevalley--Herbrand formula for $p$-class groups 
\cite[pp. 402-406]{Che1933} is the following, where $G := \Gal(L/K)$:
$$\order \CH_L^G = \ds  \order \CH_K \times \frac{p^{n (r-1)}}
{(E_K : E_K \cap \Norm_{L/K}(L^\times))},$$ 
$r \geq1$ being the number of 
prime ideals of $K$ ramified in $L/K$. If $r=1$, then:
$$\order \CH_L^G = \order \CH_K, $$
which does not imply any group isomorphism $\CH_L^G \simeq \CH_K$;
this subtlety of Chevalley--Herbrand formula will be the key for the proof which
needs to compute the $\order \CH_{L,\varphi_0}^G$'s, 
$\varphi_0 \in \Phi_k$, that we call the $p$-localizations of the formula; even if 
one could guess the result, the frequent capitulation (total or not) of $\CH_K$ in $L$, 
goes against a trivial proof.

\subsection{The \texorpdfstring{$p$}{Lg}-localizations of 
\texorpdfstring{$\CH_L^G$}{Lg} as \texorpdfstring{$\Z_p[\CG_K]$}{Lg}-module}

The $p$-localizations of the Chevalley--Herbrand formula were given, for the case 
$p \nmid [K : \Q]$ (i.e., $K_0=\Q$), in the 1970/1980's (see \cite{Gra2022,Gra2023} 
for some story), but here we must give some improvements, regarding $\chi$ 
as character of $\Gal(L/M_0) \simeq \CG_K$, with $\chi_p \ne 1$. In other words, for 
$\varphi_0 \in \Phi_k$, we consider $\CH_{L,\varphi_0}$ as $\Z_p[\CG_K]$-module for 
which $(\CH_{L,\varphi_0})^G = (\CH_L^G)^{e_{\varphi_0}}$ (denoted $\CH_{L,\varphi_0}^G$), 
since $(\CH_L^{e_{\varphi_0}})^G =  (\CH_L^G)^{e_{\varphi_0}}$ by commutativity
(non semi-simple $\Gamma$-modules).

\smallskip
To get a formula for the orders of the $\varphi_0$-components $\CH_{L,\varphi_0}^G$, 
we follow the process given in Jaulent's Thesis \cite[Chapitre III, p. 167]{Jau1986}.

\begin{theorem}\label{chevalleylocal}
Let $K/\Q$ be a real cyclic extension of Galois group $\CG_K = \gamma \oplus \Gamma$, 
$\gamma$ of prime-to-$p$ order $d$ and $\Gamma \simeq \Z/p^e \Z$, $e \geq 1$.
Let $k := K^\Gamma$.

\smallskip\noindent
Let $M_0/\Q$ be the real cyclic extension of degree $p^n$, $n \in [1, N]$,
contained in $\Q(\mu_\ell^{})$, where $\ell \equiv 1 \pmod{2p^N}$;
we assume that $\ell$ is totally inert in $K$.
Put $L = M_0 K$ and $G := \Gal(L/K) =: \langle \sigma \rangle$.  
Let $\Bj_{L/K}^{}$ be the extension of ideals $I_K \to I_L^G$ and
$\BJ_{L/K}$ the corresponding transfer $\BH_K \to \BH_L^G$ of classes.

\smallskip
{\bf a)} We have the exact sequence of $\Z[\CG_K \times G]$-modules:
\begin{equation*}
\begin{aligned}
1 \too & \Ker(\BJ_{L/K}) \too  \Hom^1(G,E_L) \too \Coker (\Bj_{L/K}^{}) \too  \\
\too & \Coker (\BJ_{L/K}) \too \Hom^2(G, E_L) \too  E_K/E_K \cap \Norm(L^\times) \to 1;
\end{aligned}
\end{equation*}

\noindent
 If $\CH_K$ capitulates in $L$, then:
 
 \smallskip
{\bf b)}  $\order \CH_{L,\varphi_0}^G =  \order \CH_{K,\varphi_0}$,
for all $\varphi_0 \in \Phi_k \setminus \{1\}$;

\smallskip
{\bf c)}  
$\CH_{K,\varphi_0}\! \simeq \Hom^1(G,\CE_{L,\varphi_0})\! =
\CE^*_{L,\varphi_0}/ \CE_{L,\varphi_0}^{1-\sigma}$ and
$\CH_{L,\varphi_0}^G \! \simeq \Hom^2(G,\CE_{L,\varphi_0}) = 
\CE_{K,\varphi_0}/ \Norm_{L/K}(\CE_{L,\varphi_0})$, for all $\varphi_0 \in \Phi_k \setminus \{1\}$. 
\end{theorem}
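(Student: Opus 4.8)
The plan is to establish the $\Z[\CG_K\times G]$-equivariant six-term sequence of \textbf{a)} once and for all by Galois descent, and then to obtain \textbf{b)} and \textbf{c)} by applying the prime-to-$p$ idempotent $e_{\varphi_0}$ and invoking the capitulation hypothesis. For \textbf{a)}, I would start from the two short exact sequences of $G$-modules $1\too E_L\too L^\times\too P_L\too 1$ (units and principal ideals) and $1\too P_L\too I_L\too\CH_L\too 1$, and take $G$-invariants. Hilbert~90 kills $H^1(G,L^\times)$, so the first sequence identifies $\Coker(P_K\to P_L^G)$ with $\Hom^1(G,E_L)$ and, one degree up, yields a short exact sequence $1\too H^1(G,P_L)\too\Hom^2(G,E_L)\too E_K/E_K\cap\Norm_{L/K}(L^\times)\too 1$, the last term being the image of $\wh{\Hom}^0(G,E_L)=E_K/\Norm(E_L)$ in $\wh{\Hom}^0(G,L^\times)=K^\times/\Norm(L^\times)$. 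Since $I_L$ is a permutation module, $H^1(G,I_L)=0$, so the second sequence identifies $\CH_L^G/\mathrm{im}(I_L^G)$ with $H^1(G,P_L)$. Running the snake lemma on the vertical maps $P_K\to P_L^G$, $\Bj_{L/K}$ and $\BJ_{L/K}$ (the latter with codomain $\mathrm{im}(I_L^G)$), whose two left kernels vanish because extension of ideals is injective, and splicing in the two identifications above, produces the stated sequence; this is the descent computation of Jaulent \cite{Jau1986}.

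Next I would localize the whole sequence at $e_{\varphi_0}$, $\varphi_0\in\Phi_k\setminus\{1\}$ (an exact operation), and show two terms vanish. Since $\ell$ is totally inert in $K$, there is a single prime of $K$ above $\ell$, totally and tamely ramified in $L/K$; hence $\Coker(\Bj_{L/K})$ is cyclic of order $p^n$, generated by the unique prime of $L$ above $\ell$, on which $\CG_K$ acts trivially, so $e_{\varphi_0}$ annihilates it. Moreover tameness there makes every local unit a norm, units are local norms at all unramified and at the real archimedean places, and the Hasse norm theorem for the cyclic extension $L/K$ gives $E_K\subset\Norm_{L/K}(L^\times)$; thus $E_K/E_K\cap\Norm_{L/K}(L^\times)=1$. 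This vanishing is precisely the equality $(E_K:E_K\cap\Norm(L^\times))=1$ which, with $r=1$, turns the Chevalley--Herbrand formula into $\order\CH_L^G=\order\CH_K$.

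Finally, capitulation of $\CH_K$ in $L$ means $\BJ_{L/K}=0$, i.e.\ $\Ker(\BJ_{L/K})=\CH_K$ and $\Coker(\BJ_{L/K})=\CH_L^G$. With the two trivial-component terms gone, the localized sequence breaks into the isomorphisms $\CH_{K,\varphi_0}\simeq\Hom^1(G,\CE_{L,\varphi_0})$ and $\CH_{L,\varphi_0}^G\simeq\Hom^2(G,\CE_{L,\varphi_0})$, and the Tate description of $\wh{\Hom}^{-1}$ and $\wh{\Hom}^0$ of the cyclic group $G$ rewrites these as $\CE^*_{L,\varphi_0}/\CE_{L,\varphi_0}^{1-\sigma}$ and $\CE_{K,\varphi_0}/\Norm_{L/K}(\CE_{L,\varphi_0})$, which is \textbf{c)}. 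Then \textbf{b)} follows by comparing orders: $\order\CH_{L,\varphi_0}^G/\order\CH_{K,\varphi_0}$ equals the Herbrand quotient of $\CE_{L,\varphi_0}$ over $G$, which depends only on $\CE_L\otimes\Q_p$; since $L$ is totally real, Dirichlet gives $\CE_L\otimes\Q_p\simeq\Q_p[\Gal(L/\Q)]\ominus\mathbf 1$, whose $\varphi_0\ne 1$ part is free over $\Q_p[G]$, so the quotient equals that of $\Z_p[G]$, namely $1$.

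The step I expect to be the main obstacle is the bookkeeping in \textbf{a)} in the presence of capitulation, exactly the point the paper flags as blocking a trivial proof: under the hypothesis the transfer $\BJ_{L/K}$ is identically zero, so $\CH_L^G$ cannot be recovered from $\CH_K$ by any naive norm or transfer comparison, and one must instead track $\Ker$ and $\Coker$ of $\BJ_{L/K}$ together with $\Coker(\Bj_{L/K})$ through the unit cohomology --- which is what the six-term sequence is designed to do. The secondary delicate point is the vanishing of the unit-norm index, resting on tameness at $\ell$ and the cyclic Hasse norm theorem rather than on any special feature of the base field.
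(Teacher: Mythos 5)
Your route is essentially the paper's: the same Jaulent-style descent (taking $G$-cohomology of $1\to E_L\to L^\times\to P_L\to 1$ to get $\Coker(P_K\to P_L^G)\simeq \Hom^1(G,E_L)$ and $1\to \Hom^1(G,P_L)\to \Hom^2(G,E_L)\to E_K/E_K\cap\Norm(L^\times)\to 1$ via Hilbert 90, then the snake lemma on the ideal/class-group diagram with $\Hom^1(G,I_L)=1$, and splicing), followed by $\varphi_0$-localization killing $\Coker(\Bj_{L/K})_{\varphi_0}$, capitulation giving $\Ker(\BJ_{L/K})=\CH_K$ and $\Coker(\BJ_{L/K})=\CH_L^G$, and a Herbrand-quotient computation $h(\CE_{L,\varphi_0})=1$. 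Your computation of that quotient through $E_L\otimes\Q$ (the Dirichlet--Herbrand representation) is a slightly cleaner variant of the paper's Minkowski-unit argument, and deriving c) before b) is immaterial.

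The one step whose justification is wrong is the vanishing of $E_K/E_K\cap\Norm_{L/K}(L^\times)$: it is not true that tameness at $\ell$ makes every local unit a norm. For the totally (tamely) ramified local extension $L_w/K_v$ of degree $p^n$ at the unique place above $\ell$, local class field theory gives $U_{K_v}/\Norm(U_{L_w})\simeq$ the inertia group, cyclic of order $p^n$ (concretely, a Teichm\"uller unit is a local norm only if its image in the residue field is a $p^n$-th power), so a global unit is not automatically a local norm at $\ell$ and Hasse's theorem cannot be invoked directly as you state it. The conclusion is nevertheless true and the repair is standard: a unit is a local norm at every unramified finite place and at the (real) archimedean places, and since exactly one place ramifies in the cyclic extension $L/K$, the product formula for the norm residue symbols forces the symbol at $\ell$ to be trivial as well; Hasse's norm theorem then gives $E_K\subseteq\Norm_{L/K}(L^\times)$. (Alternatively, as the paper does, one only needs the $\varphi_0$-component: $E_K/E_K\cap\Norm_{L/K}(L^\times)$ embeds $\CG_K$-equivariantly into the inertia group at the single ramified prime, on which $\CG_K$ acts trivially, so its $e_{\varphi_0}$-part vanishes for all $\varphi_0\neq 1$.) With that substitution your argument goes through and coincides with the paper's proof.
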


\noindent
{\bf Proof.}
Let $I_K$ and $I_L$ (resp.  $P_K$ and $P_L$) be the ideal groups
(resp. the subgroups of principal ideals), of $K$ and $L$, respectively. 
Consider the exact sequences of $\Z[\CG_K \times G]$-modules:
\begin{equation}\label{ab}
1 \to E_L \to L^\times \to P_L \to 1, \ \ \ \  1 \to E_K \to K^\times \to P_K \to 1
\end{equation}
\begin{equation}\label{cd}
1 \to P_L \to I_L \to \BH_L \to 1, \ \ \ \ 1 \to P_K \to I_K \to \BH_K \to 1.
\end{equation}

\begin{lemma}\label{lemma0}
We have the following properties:

\smallskip
(i) $\Coker \big(\Bj'_{L/K} : P_K \to P_L^G \big) \simeq \Hom^1(G,E_L)$;

\smallskip
(ii) $\Hom^1(G,P_L) \simeq \Ker \big[  \Hom^2(G,E_L) \!\to \!\Hom^2(G,L^\times)  \big]$;

\smallskip
(iii)  $\Hom^1(G,I_L) = 1$.
\end{lemma}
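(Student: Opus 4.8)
The plan is to read off all three isomorphisms from the long exact sequences of $G$-cohomology attached to the short exact sequences \eqref{ab} and \eqref{cd}, exploiting that $G = \Gal(L/K)$ is cyclic and that $L^G = K$. The only external inputs are Hilbert's Theorem~90 in the cohomological form $\Hom^1(G,L^\times) = 1$, Shapiro's lemma, and the vanishing $\Hom(H,\Z) = 1$ for a finite group $H$.

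I would treat (i) and (ii) together, from the sequence $1 \to E_L \to L^\times \to P_L \to 1$. Using $(L^\times)^G = K^\times$ and $E_L^G = E_K$, the long exact sequence opens with the segment
$$1 \too E_K \too K^\times \too P_L^G \too \Hom^1(G,E_L) \too \Hom^1(G,L^\times).$$
By Hilbert~90 the last term is trivial, so $\Hom^1(G,E_L)$ is the cokernel of $K^\times \to P_L^G$. Since the composite $K^\times \to L^\times \to P_L$ sends $\alpha$ to the extended ideal $(\alpha)_L$, the image of this map is exactly $\Bj'_{L/K}(P_K)$; this identifies $\Hom^1(G,E_L)$ with $\Coker\big(\Bj'_{L/K} : P_K \to P_L^G\big)$, giving (i). Pushing the same long exact sequence one step further and again invoking $\Hom^1(G,L^\times) = 1$ yields
$$1 \too \Hom^1(G,P_L) \too \Hom^2(G,E_L) \too \Hom^2(G,L^\times),$$
so that $\Hom^1(G,P_L)$ is the kernel of $\Hom^2(G,E_L) \to \Hom^2(G,L^\times)$, which is precisely (ii).

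For (iii) I would use that $I_L$ is a permutation $G$-module: grouping the primes of $L$ into $G$-orbits, one over each prime $\wp$ of $K$, gives $I_L \simeq \bigoplus_{\wp} \Z[G/D_\wp]$, where $D_\wp$ is the decomposition group in $G$ of a prime of $L$ above $\wp$. Shapiro's lemma gives $\Hom^1(G,\Z[G/D_\wp]) \simeq \Hom^1(D_\wp,\Z)$, and the latter equals $\Hom(D_\wp,\Z) = 1$ because $D_\wp$ is finite while $\Z$ is torsion-free. Summing over $\wp$ yields $\Hom^1(G,I_L) = 1$.

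I expect no genuinely hard step here: the content is formal Galois cohomology over the cyclic group $G$, and the three isomorphisms are just successive terms of the same long exact sequences. The only point demanding care is the bookkeeping in (i) — verifying that the image of $K^\times$ in $P_L^G$ is exactly the extension $\Bj'_{L/K}(P_K)$ of principal ideals, neither larger nor smaller, so that the cohomological cokernel matches the geometric cokernel appearing in the statement. It is also worth recording, for the use made of this lemma in Theorem~\ref{chevalleylocal}, that since $G$ is cyclic these may be read as Tate cohomology groups, making the sequences $2$-periodic and allowing $\Hom^2$ to be treated on the same footing as $\Hom^0$ when the Chevalley--Herbrand count is assembled.
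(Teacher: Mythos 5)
Your proof is correct and follows essentially the same route as the paper: points (i) and (ii) are read off from the long exact $G$-cohomology sequence attached to $1 \to E_L \to L^\times \to P_L \to 1$ together with Hilbert's Theorem~90, and (iii) is the standard permutation-module argument that the paper simply labels as classical, which you merely make explicit via Shapiro's lemma. Nothing to correct.
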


\begin{proof}
We have, from exact sequences \eqref{ab}:
\begin{equation*}
\begin{aligned}
1 & \to E_L^G = E_K \to L^\times{}^G = K^\times \to P_L^G \to 
\Hom^1(G,E_L) \to  \Hom^1(G,L^\times)  \\
& \to  \Hom^1(G,P_L) \to \Hom^2(G,E_L) \to \Hom^2(G,L^\times).
\end{aligned}
\end{equation*}

Since $\Hom^1(G,L^\times)  = 1$ (Hilbert's Theorem 90), this yields (i) and (ii).
The claim (iii) is classical since $I_L$ is a $\Z[G]$-module generated
by the prime ideals of $L$ on which the Galois action is canonical.
\end{proof}

From exact sequences \eqref{cd}, we obtain the following diagram:
$$\begin{array}{ccccccccc}
1  &  \!\! \too \!\! & P_K  &  \!\! \tooo \!\! & I_K &  \!\! \tooo \!\! & \BH_K & \!\! \too \ \ 1 & \\   
\vspace{-0.3cm}   \\
& &   \Big \downarrow &  
&\hspace{-2.9cm} \hbox{\ft$\Bj'_{L/K}$\ns}
&\hspace{-1.1cm}\Big \downarrow \hbox{\ft$\Bj_{L/K}^{}$\ns}  
&& \hspace{-1.6cm}\Big \downarrow  \hbox{\ft$\BJ_{L/K}$\ns}  \\   
 \vspace{-0.3cm}    \\
1  &   \!\! \too \!\!   & P_L^G  &  \!\! \tooo \!\! &  I_L^G &  \!\! \tooo \!\! &  \BH_L^G & 
\!\! \!\! \!\! \!\! \!\! \!\! \too \!\! & \!\! \!\!  \!\! \!\! \!\Hom^1(G,P_L) \too 1\,.
\end{array} $$

The snake lemma gives the exact sequence:
\begin{equation*}
\begin{aligned}
1& \to \Ker(\Bj'_{L/K}) \to \Ker(\Bj_{L/K}^{}) \too \Ker(\BJ_{L/K})  
\to \Coker(\Bj'_{L/K}) = \Hom^1(G,E_L) \\
& \to \Coker(\Bj_{L/K}^{}) \to  \Coker(\BJ_{L/K})  
\to  \Hom^2(G,E_L) \ds \mathop{\to}^{u\ } \Hom^2(G,L^\times),
\end{aligned}
\end{equation*}

\noindent
since $\Ker(\Bj'_{L/K}) = \Ker(\Bj_{L/K}^{}) = 1$ and
$\Hom^2(G,L^\times) \simeq K^\times/\Norm_{L/K}(L^\times)$, the above
exact sequence becomes with ${\rm Im}(u) = E_K/E_K \cap \Norm_{L/K}(L^\times)$:
\begin{equation*}
\begin{aligned}
& 1 \to \Ker(\BJ_{L/K}) \to \Hom^1(G,E_L)   \to \Coker(\Bj_{L/K}^{}) \to \\
  & \Coker(\BJ_{L/K})  \to \Hom^2(G,E_L) 
\to E_K/E_K \cap \Norm_{L/K}(L^\times) \to 1,
\end{aligned}
\end{equation*}
proving {\bf (a)}.
Whence the localized exact sequences of $\Z_p[\CG_K]$-modules:
\begin{equation}
\left \{\begin{aligned}
&1 \to  \Ker(\BJ_{L/K})_{\varphi_0} \to  \Hom^1(G,\CE_{L,\varphi_0}) \to  
\Coker(\Bj_{L/K}^{})_{\varphi_0}  \to \\
&\Coker(\BJ_{L/K})_{\varphi_0}  \to  \Hom^2(G,\CE_{L,\varphi_0}) \to 
\CE_{K,\varphi_0}/\CE_{K,\varphi_0} \cap \Norm_{L/K}(L^\times) \to 1;
\end{aligned}\right.
\end{equation}

\noindent
since  $L/K$ is totally ramified at the unique prime $(\ell)$ of $K$
(inert in $K/\Q$),
$\Coker (\Bj_{L/K}^{})_{\varphi_0} = 1$, 
$(\CE_K/\CE_K \cap \Norm_{L/K}(L^\times))_{\varphi_0} = 1$
for $\varphi_0 \in \Phi_k \setminus \{1\}$, giving:
\begin{equation}\label{kercoker}
\left \{\begin{aligned}
&1 \too  \Ker(\BJ_{L/K})_{\varphi_0} \too  \Hom^1(G,\CE_{L,\varphi_0}) \too 1  \\
&1 \too  \Coker(\BJ_{L/K})_{\varphi_0} \too  \Hom^2(G,\CE_{L,\varphi_0}) \too 1.
\end{aligned}\right.
\end{equation}

\noindent
The capitulation of $\CH_{K,\varphi_0}$ in $L$ yields the semi-simple $p$-localized formula:
\begin{equation}\label{plocal}
\frac{\order \Coker(\BJ_{L/K})_{\varphi_0}}{\order \Ker(\BJ_{L/K})_{\varphi_0}} = 
\frac{\order \CH_{L,\varphi_0}^G}{\order \CH_{K,\varphi_0}} = 
\frac{\order \Hom^2(G,\CE_{L,\varphi_0})}{\order \Hom^1(G,\CE_{L,\varphi_0})},
\end{equation}

\noindent
where $h(\CE_{L,\varphi_0}) := \ds \frac{\order \Hom^2(G,\CE_{L,\varphi_0})}
{\order \Hom^1(G,\CE_{L,\varphi_0})}$
is the Herbrand quotient of $\CE_{L,\varphi_0}$. 
Since $(E_L \otimes\, \Q)\, \oplus\, \Q$ is the regular representation,
there exists a ``Minkowski unit'' $\varepsilon$ such that the 
$\Z[\gamma][\Gamma \times G]$-module
generated by $\varepsilon$ is of prime-to-$p$ index in $E_L$; so,
$\CE_L \oplus \Z_p \simeq \Z_p[\gamma][\Gamma \times G]$ as $\gamma$-modules. 
Thus, $(\CE_L \oplus \Z_p)_{\varphi_0} = \CE_{L,\varphi_0} \simeq
\Z_p[\mu_{d'}] [\Gamma \times G]$ for $\varphi_0 \ne 1$, proving that $h(\CE_{L,\varphi_0})=1$. 
In particular, this yields $\order \CH_{L,\varphi_0}^G =  \order \CH_{K,\varphi_0}$ (point {\bf (b)});
then {\bf (c)} comes from \eqref{kercoker} giving the isomorphisms $\CH_{L,\varphi_0}^G \simeq 
\CE_{K,\varphi_0}/ \Norm_{L/K}(\CE_{L,\varphi_0})$ and $\CH_{K,\varphi_0} \simeq
\CE^*_{L,\varphi_0}/ \CE_{L,\varphi_0}^{1-\sigma}$.
\qed

\subsection{Proof of the RAMC under capitulation in 
\texorpdfstring{$K(\mu_\ell^{})$}{Lg}}\label{fin}
We can think about the main result involving Theorem \ref{chevalleylocal}\,{\bf (b),\,(c)},
under capitulation of $\CH_K$ in $L \subset K(\mu_\ell^{})$, $\ell  \equiv 1\!\! \pmod {2 p^N}$
totally inert in $K$; so:
\begin{equation}\label{f0}
\order \CH_{L,\varphi_0}^G =  \order \CH_{K,\varphi_0}\ \ \, \& \ \ \,  
\CH_{L,\varphi_0}^G \simeq (\CE_K / \Norm_{L/K}(\CE_L))_{\varphi_0},\ 
\varphi_0 \in \Phi_k \setminus \{1\}.
\end{equation}

Knowing only the relation 
$\order \CH^\ar_{K,\chi} = (\CE_K : \CE^\circ_K\, \CF_{\!K})$ (Theorem \ref{chiformulaH}), 
we have to prove (Theorem \ref{isotopicphi} giving the semi-simple decomposition 
of an arithmetic $\chi$-object into its $\varphi$-components), that:
$$\order \CH_{K,\varphi}^\ar = \order (\CE_K / \CE^\circ_K \, \CF_{\!K})_{\varphi_0},
\ \forall \, \varphi = \varphi_0 \chi_p,\, \  \varphi_0 \mid \chi_0 ; $$ 
in fact, we will prove $\order \CH_{K,\varphi}^\ar \leq
\order (\CE_K / \CE^\circ_K \, \CF_{\!K})_{\varphi_0}$, but the above global 
formula $\order \CH^\ar_{K,\chi} = \order (\CE_K / \CE^\circ_K \, \CF_{\!K})$ in which: 
$$\hbox{
$\order \CH^\ar_{K,\chi} = \prod_{\varphi \mid \chi}\order \CH^\ar_{K,\varphi}$
and $\order (\CE_K / \CE^\circ_K \, \CF_{\!K}) = \prod_{\varphi_0 \mid \chi_0}
\order (\CE_K / \CE^\circ_K \, \CF_{\!K})_{\varphi_0}$,} $$
will imply equalities. 

\begin{lemma} \label{cap0}Let $\ov K$ and $\ov L$ be the subfields 
of $K$ and $L$, respectively, such that $[K : \ov K] = [L : \ov L]  = p$.
If $\CH_K$ capitulates in $L$, then $\CH_{\ov K}$ capitulates in $\ov L$.
\end{lemma}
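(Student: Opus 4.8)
The plan is to exhibit $\ov K\subset K$, $\ov K\subset\ov L\subset L$ and $K\subset L$ as a \emph{Cartesian} square of fields, and then to transport the capitulation hypothesis from the top edge $L/K$ to the bottom edge $\ov L/\ov K$ by combining a base-change (Mackey) identity with the surjectivity of an arithmetic norm. The naive idea of composing transfers is not enough: transitivity gives only $\BJ_{L/\ov L}\circ\BJ_{\ov L/\ov K}=\BJ_{L/K}\circ\BJ_{K/\ov K}=1$, so $\BJ_{\ov L/\ov K}$ lands in $\Ker(\BJ_{L/\ov L})$, which need not be trivial; one must genuinely ``come down'' from $L$ to $\ov L$ via a norm.

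First I would pin down $\ov L$. Since $L=M_0K$ with $M_0$ linearly disjoint from $K$, I take $\ov L=M_0\ov K$, that is, the index-$p$ subfield of $L$ containing $M_0$ (equivalently, the one with $\Gal(L/\ov L)$ equal to the order-$p$ subgroup of $\Gamma\subset\CG_K=\Gal(L/M_0)$). A degree count gives $[\ov L:\ov K]=[M_0:\Q]=p^n$ and $[L:\ov L]=p$, while $[L:\ov K]=p^{n+1}=[K:\ov K]\cdot[\ov L:\ov K]$ shows that $K$ and $\ov L$ are linearly disjoint over $\ov K$, with $K\ov L=L$ and $K\cap\ov L=\ov K$. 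In particular restriction yields isomorphisms $\Gal(L/\ov L)\simeq\Gal(K/\ov K)$ and $\Gal(\ov L/\ov K)\simeq\Gal(L/K)=G$, so we are in a genuine Cartesian situation.

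The key step is the base-change identity
$$\Norm_{L/\ov L}\circ\BJ_{L/K}=\BJ_{\ov L/\ov K}\circ\Norm_{K/\ov K}\colon\ \CH_K\too\CH_{\ov L}.$$
I would prove it directly on ideals: for an ideal $\mathfrak b$ of $K$, both $\BJ_{\ov L/\ov K}(\Norm_{K/\ov K}(\mathfrak b))\,\mathcal O_L$ and $\Norm_{L/\ov L}(\mathfrak b\,\mathcal O_L)\,\mathcal O_L$ equal $\big(\prod_{\sigma\in\Gal(K/\ov K)}\mathfrak b^{\sigma}\big)\mathcal O_L$, using that restriction $\Gal(L/\ov L)\to\Gal(K/\ov K)$ is a bijection and that $\mathfrak b^{\tilde\sigma}=\mathfrak b^{\tilde\sigma|_K}$ for $\mathfrak b\subset K$. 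Injectivity of the extension of ideals along $\ov L/\ov K$ then gives the equality of ideals of $\ov L$, hence of classes. This is the Mackey double-coset formula, which collapses to a single term precisely because $K\cap\ov L=\ov K$ and $K\ov L=L$.

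Now the hypothesis that $\CH_K$ capitulates in $L$ means $\BJ_{L/K}=1$, so the left-hand side vanishes and $\BJ_{\ov L/\ov K}\circ\Norm_{K/\ov K}=1$. It remains to note that $\Norm_{K/\ov K}\colon\CH_K\to\CH_{\ov K}$ is surjective: since $K/k$ is totally ramified, so is the sub-extension $K/\ov K$ (as $k\subseteq\ov K\subseteq K$), whence $K$ meets the $p$-Hilbert class field of $\ov K$ only in $\ov K$, and class field theory makes the norm surjective. Surjectivity then forces $\BJ_{\ov L/\ov K}=1$, i.e.\ $\CH_{\ov K}$ capitulates in $\ov L$. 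The main obstacle is the base-change identity together with the norm surjectivity; once the Cartesian geometry of the square and the total ramification of $K/\ov K$ are secured, the conclusion is immediate.
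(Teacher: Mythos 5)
Your proof is correct, but it crosses the square by a different identity than the paper does. The paper's own argument never writes down your base-change formula: it first uses surjectivity of $\Norm_{L/K}$ on $p$-class groups (total ramification at the prime above $\ell$) to translate the hypothesis into vanishing of the algebraic norm, $\Nu_{L/K}(\CH_L)=1$, then applies $\Norm_{L/\ov L}$ and its Galois-equivariance, $\Norm_{L/\ov L}\circ\Nu_{L/K}=\Nu_{\ov L/\ov K}\circ\Norm_{L/\ov L}$, together with $\Norm_{L/\ov L}(\CH_L)=\CH_{\ov L}$, to obtain $\Nu_{\ov L/\ov K}(\CH_{\ov L})=1$, which is decoded back into capitulation of $\CH_{\ov K}$ through $\Nu_{\ov L/\ov K}=\BJ_{\ov L/\ov K}\circ\Norm_{\ov L/\ov K}$ and surjectivity of $\Norm_{\ov L/\ov K}$; so it needs norm surjectivity in $L/K$, $L/\ov L$ and $\ov L/\ov K$, all supplied by ramification. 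You instead stay with transfers and prove the Mackey identity $\Norm_{L/\ov L}\circ\BJ_{L/K}=\BJ_{\ov L/\ov K}\circ\Norm_{K/\ov K}$ on ideals (your computation is right, and your opening remark that mere transitivity of transfers only lands you in $\Ker(\BJ_{L/\ov L})$ is apt), so the single class-field-theoretic input you need is surjectivity of $\Norm_{K/\ov K}:\CH_K\to\CH_{\ov K}$; your explicit identification $\ov L=M_0\ov K$ also pins down which index-$p$ subfield of $L$ is meant, which the lemma's wording leaves ambiguous and the paper's later use (with $\Norm_{L/\ov L}$ restricting to $\Norm_{K/\ov K}$) requires. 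One small caveat: justify the surjectivity of $\Norm_{K/\ov K}$ without invoking ``$K/k$ totally ramified'' at a single prime (which need not hold for a general compositum $K=kK_0$); it suffices that the degree-$p$ extension $K/\ov K$ is ramified somewhere, which is automatic since the inertia subgroups generate the cyclic group $\CG_K$, so some inertia subgroup has order divisible by $p^e$ and hence contains $\Gal(K/\ov K)$. In substance both proofs rest on the same two ingredients (a norm/transfer commutation across the Cartesian square plus norm surjectivity from ramification); yours buys a shorter list of surjectivities and avoids passing through $\CH_L$ and $\CH_{\ov L}$, at the modest cost of establishing the base-change identity, while the paper's version is a three-line computation once the algebraic norm formalism $\Nu=\BJ\circ\Norm$ is in place.
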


\begin{proof}
Recall that $\Norm_{L/K}(\CH_L) = \CH_K$ and $\Norm_{L/\ov L}(\CH_L) = \CH_{\ov L}$
since the extensions are totally ramified. The capitulation of $\CH_K$ in $L$ is
equivalent to $\Nu_{L/K}(\CH_L) = 1$ since $\Nu_{L/K} = \BJ_{L/K} \circ \Norm_{L/K}$.
If so, $1 = \Norm_{L/\ov L} \circ \Nu_{L/K}(\CH_L) = \Nu_{\ov L/\ov K}(\Norm_{L/\ov L}(\CH_L))
= \Nu_{\ov L/\ov K}(\CH_{\ov L})$ and $\CH_{\ov L}$ capitulates in $\ov L$ (analogous results 
for the subfields $k_i$ of $K$ regarding the extensions $k_i L^\Gamma \subset L$).
\end{proof}

\begin{theorem}\label{relfond}
Let $K/\Q$ be a real cyclic extension of Galois group $\CG_K = \gamma \oplus \Gamma$, 
$\gamma$ of prime-to-$p$ order $d$ and $\Gamma \simeq \Z/p^e \Z$, $e \geq 1$.
Let $M_0/\Q$ be the real cyclic extension of degree $p^N$,
contained in $\Q(\mu_\ell^{})$, where $\ell \equiv 1 \pmod{2p^N}$, $N \geq 1$,
and let $L = M_0K$; we assume, moreover, that $\ell$ is totally inert in $K/\Q$. 

\noindent
If $\CH_K$ capitulates in $L$, then
$\order \CH^\ar_{K,\varphi} = \order (\CE_K / \CE^\circ_K \CF_{\!K})_{\varphi_0}$,
for all $\varphi \mid \chi$, giving the RAMC for $K$.
\end{theorem}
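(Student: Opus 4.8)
The plan is to establish, for each irreducible $p$-adic character $\varphi = \varphi_0\,\chi_p$ with $\varphi_0 \mid \chi_0$ (so $\varphi_0 \ne 1$, since $\chi_0 \ne 1$ by the reduction of \S\ref{subIII3}), the one-sided bound
\[
\order \CH^\ar_{K,\varphi} \le \order (\CE_K/\CE^\circ_K\,\CF_{\!K})_{\varphi_0},
\]
and then to turn every such inequality into an equality in one stroke. For this last step I would use that both $\CH^\ar_{K,\chi}$ and $\CE_K/\CE^\circ_K\,\CF_{\!K}$ are $\chi$-objects, so Theorem \ref{isotopicphi} gives the factorisations $\order \CH^\ar_{K,\chi} = \prod_{\varphi \mid \chi}\order \CH^\ar_{K,\varphi}$ and $\order (\CE_K/\CE^\circ_K\,\CF_{\!K}) = \prod_{\varphi_0\mid\chi_0}\order (\CE_K/\CE^\circ_K\,\CF_{\!K})_{\varphi_0}$; since Theorem \ref{chiformulaH} identifies the two global orders, the two products agree, and as the indexing sets correspond bijectively via $\varphi \leftrightarrow \varphi_0$, a term-by-term inequality can only be a term-by-term equality.

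To produce the bound I would first telescope the class-group side. Because $K/\ov K$ is totally ramified, $\Norm_{K/\ov K}$ is surjective on $p$-class groups and commutes with $e_{\varphi_0}$; combined with the characterisation $\CH^\ar_{K,\varphi} = \{x \in \CH_{K,\varphi_0} : \Norm_{K/\ov K}(x)=1\}$ recalled in \S\ref{remafond}, this gives a short exact sequence with surjective right map, hence $\order \CH^\ar_{K,\varphi} = \order\CH_{K,\varphi_0}\big/\order\CH_{\ov K,\varphi_0}$. Now Lemma \ref{cap0} ensures that $\CH_{\ov K}$ capitulates in $\ov L = M_0\ov K$, so that Theorem \ref{chevalleylocal}\,\textbf{(b),(c)} applies unchanged to both $(K,L)$ and $(\ov K,\ov L)$ (the prime $\ell$ being inert in $\ov K$ as well), yielding the two per-character orders $\order \CH_{K,\varphi_0} = [\CE_{K,\varphi_0} : \Norm_{L/K}\CE_{L,\varphi_0}]$ and $\order \CH_{\ov K,\varphi_0} = [\CE_{\ov K,\varphi_0}:\Norm_{\ov L/\ov K}\CE_{\ov L,\varphi_0}]$.

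The remaining step is an index computation inside $V := \CE_{K,\varphi_0}$. Write $N_A := \Norm_{L/K}\CE_{L,\varphi_0}$; note that for $\varphi_0\ne1$ the only proper subfields with nonzero $\varphi_0$-part lie between $k$ and $\ov K$, so $\CE^\circ_{K,\varphi_0}$ is the image $E^\circ$ of $\CE_{\ov K,\varphi_0}$, and set $N_B^\circ := \Norm_{\ov L/\ov K}\CE_{\ov L,\varphi_0}\subseteq E^\circ$. Since $\Gal(L/K)$ restricts isomorphically to $\Gal(\ov L/\ov K)$, one has $\Norm_{L/K}(v) = \Norm_{\ov L/\ov K}(v)$ for $v \in \CE_{\ov L}$, whence $N_B^\circ \subseteq E^\circ \cap N_A$. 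Then
\[
\order \CH^\ar_{K,\varphi} = \frac{[V:N_A]}{[E^\circ:N_B^\circ]} = \frac{[V:E^\circ N_A]\,[E^\circ : E^\circ\cap N_A]}{[E^\circ:N_B^\circ]} \le [V:E^\circ N_A],
\]
using $[E^\circ:E^\circ\cap N_A]\le[E^\circ:N_B^\circ]$. Finally Proposition \ref{invertible} gives $\CF_{K,\varphi_0} = \Norm_{L/K}(\CF_{L,\varphi_0})\subseteq N_A$, so $(\CE^\circ_K\,\CF_{\!K})_{\varphi_0} = E^\circ\,\CF_{K,\varphi_0}\subseteq E^\circ N_A$ and $[V:E^\circ N_A]\le\order(\CE_K/\CE^\circ_K\,\CF_{\!K})_{\varphi_0}$, which is exactly the wanted bound.

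The real content sits in Theorem \ref{chevalleylocal}, and this is where I expect the main difficulty to lie: it is the capitulation hypothesis, via the triviality of the Herbrand quotient $h(\CE_{L,\varphi_0})=1$, that collapses all the Chevalley--Herbrand correction terms into the clean index $[\CE_{K,\varphi_0}:\Norm_{L/K}\CE_{L,\varphi_0}]$. The delicate points are to secure this description simultaneously at $K$ and at $\ov K$ (whence the necessity of Lemma \ref{cap0}) and to align the two notions of ``relative object''---the kernel of $\Norm_{K/\ov K}$ on classes against division by $\CE^\circ_K$ on units---so that the norm-surjectivity of cyclotomic units enters with the correct direction of inequality. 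Once these are in place, the index bookkeeping above and the global analytic identity of Theorem \ref{chiformulaH} finish the argument.
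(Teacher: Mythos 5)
Your proposal is correct and follows essentially the same route as the paper's proof: the exact sequence $1\to\CH^\ar_{K,\varphi}\to\CH_{K,\varphi_0}\to\CH_{\ov K,\varphi_0}\to1$, Lemma \ref{cap0} combined with Theorem \ref{chevalleylocal}\,{\bf (b),(c)} applied to both $(K,L)$ and $(\ov K,\ov L)$ to get $\order\CH^\ar_{K,\varphi}=(\CE_K:\Norm(\CE_L))_{\varphi_0}\big/(\CE_{\ov K}:\Norm(\CE_{\ov L}))_{\varphi_0}$, Proposition \ref{invertible} to compare with $(\CE_K:\CE_{\ov K}\CF_{\!K})_{\varphi_0}$, and the product formulas from Theorems \ref{isotopicphi} and \ref{chiformulaH} to upgrade the per-character inequalities to equalities. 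Your index bookkeeping (via $N_B^\circ\subseteq E^\circ\cap N_A$ and the second isomorphism theorem) is a slightly streamlined variant of the paper's $X_{\varphi_0},Y_{\varphi_0},Z_{\varphi_0}$ computation: it avoids the intersection identity $\CE_{\ov K}\cap\big[\Norm(\CE_{\ov L})\CF_{\!K}\big]=\Norm(\CE_{\ov L})\ov\CF_{\!K}$, at the cost of not producing the paper's by-product $\Norm(\CE_{L,\varphi_0})=\Norm(\CE_{\ov L,\varphi_0})\CF_{\!K,\varphi_0}$ obtained there from $Z_{\varphi_0}=1$.
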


\begin{proof}
For short, put $\Norm = \Norm_{L/K} = \Norm_{\ov L/\ov K}$, 
$\ov \Norm  = \Norm_{L/{\ov L}} = \Norm_{K/{\ov K}}$.
An index $(\CA : \CB)_{\varphi_0}$ means $\order(\CA / \CB)_{\varphi_0} = 
\order (\CA_{\varphi_0} / \CB_{\varphi_0})$. We compute $\CH^\ar_{K,\varphi}$ 
as the set of elements of $\CH_{K,\varphi_0}$ such that $\ov \Norm (x) = 1$
(i.e., $\CH^\ar_{K/\ov K,\varphi_0}$).

We have $(\CE_K :  \CE^\circ_K \CF_{\!K})_{\varphi_0} =
(\CE_K : \CE_{\ov K} \CF_{\!K})_{\varphi_0}$ since for the fields $\BKappa$
 such that $K^\gamma \subseteq \BKappa \varsubsetneqq K$, $\CE_{{\Kappa},\varphi_0}=1$.
We have the exact sequence (cf. \eqref{relative}, \eqref{relativephi})
$1 \to \CH^\ar_{K/\ov K,\varphi_0}\! =  \CH^\ar_{K,\varphi} \to \CH_{K,\varphi_0} 
\ds \mathop{\to}^{\ov \Norm} \CH_{\ov K,\varphi_0} \to 1$,
then, from \eqref{f0} and Lemma \ref{cap0}, $\order \CH_{K,\varphi_0}  
= (\CE_K : \Norm (\CE_L))_{\varphi_0}$ and
$\order \CH_{\ov K,\varphi_0} = (\CE_{\ov K} : \Norm (\CE_{\ov L}))_{\varphi_0}$,
giving, under capitulations, the fundamental relation:
\begin{equation}\label{f6}
\order \CH^\ar_{K,\varphi} = \frac{( \CE_K : \Norm (\CE_L))_{\varphi_0}}
{(\CE_{\ov K} : \Norm (\CE_{\ov L}))_{\varphi_0}},
\end{equation}
to be compared with $X_{\varphi_0} := \big(\CE_K : \CE_{\ov K} \CF_{\!K}\big)_{\varphi_0}$.
From the exact sequence:
\begin{equation*}
1 \to (\CE_{\ov K}\Norm (\CE_L)/\Norm (\CE_L))_{\varphi_0}
\too  (\CE_K/\Norm (\CE_L))_{\varphi_0} \too 
( \CE_K/\CE_{\ov K}\Norm (\CE_L))_{\varphi_0} \to 1
\end{equation*}
and Corollary \ref{invertible} giving $\CE_{\ov K} \CF_{\!K} = \CE_{\ov K} \Norm (\CF_{\!L}) 
\subseteq \CE_{\ov K}\Norm (\CE_L) \subseteq \CE_K$, one gets:
\begin{equation}\label{f7}
\begin{aligned}
X_{\varphi_0} = \frac{(\CE_K : \Norm (\CE_L))_{\varphi_0}}
{(\CE_{\ov K} \Norm (\CE_L) : \Norm (\CE_L))_{\varphi_0}} \times 
\big(\CE_{\ov K} \Norm (\CE_L) : \CE_{\ov K} \CF_{\!K} \big)_{\varphi_0}.
\end{aligned}
\end{equation}
\noindent
Let:
\begin{equation}\label{f8}
Y_{\varphi_0} := \big(\CE_{\ov K} \Norm (\CE_L) : \CE_{\ov K} \CF_{\!K} \big)_{\varphi_0}\ \ 
{\rm and}\ \  \ov \CF_{\!K} :=  \CF_{\!K} \cap  \CE_{\ov K};
\end{equation}
since 
$\CE_{\ov K} \cap \big[ \Norm (\CE_{\ov L}) \CF_{\!K}\big] 
= \Norm(\CE_{\ov L})\ov \CF_{\!K}$, we have the exact sequence: 
\begin{equation*}
\begin{aligned}
1 \to (\Norm (\CE_{\ov L})\ov \CF_{\!K} / \Norm(\CE_{\ov L}))_{\varphi_0} 
& \too (\CE_{\ov K}/\Norm (\CE_{\ov L}))_{\varphi_0} \\
& \too (\CE_{\ov K}  \CF_{\!K} / \Norm (\CE_{\ov L})\CF_{\!K})_{\varphi_0} \to 1,
\end{aligned}
\end{equation*}
and $\Norm (\CE_{\ov L})\CF_{\!K} \subseteq \CE_{\ov K} \CF_{\!K} 
\subseteq \CE_{\ov K}  \Norm (\CE_L)$, then giving from \eqref{f8}:
\begin{equation*}
\begin{aligned}
Y_{\varphi_0} =& \frac{(\CE_{\ov K} \Norm (\CE_L) :  
\Norm (\CE_{\ov L})\CF_{\!K})_{\varphi_0}}{(\CE_{\ov K}  \CF_{\!K} :
\Norm (\CE_{\ov L}) \CF_{\!K})_{\varphi_0}} \\
= & \frac{(\CE_{\ov K} \Norm (\CE_L) :
\Norm (\CE_{\ov L}) \CF_{\!K})_{\varphi_0} \times 
(\Norm (\CE_{\ov L}) \ov \CF_{\!K} : \Norm (\CE_{\ov L}))_{\varphi_0}}
{(\CE_{\ov K} : \Norm (\CE_{\ov L}))_{\varphi_0}}.
\end{aligned}
\end{equation*}

\noindent
So, from \eqref{f6}, \eqref{f7}, $X_{\varphi_0} = \ds
\frac{(\CE_K : \Norm (\CE_L))_{\varphi_0}}
{(\CE_{\ov K} : \Norm (\CE_{\ov L}))_{\varphi_0}} \times Z_{\varphi_0}
= \order \CH^\ar_{K,\varphi}\! \times Z_{\varphi_0}$, where:
\begin{equation*}
Z_{\varphi_0} :=  \frac{(\CE_{\ov K} \Norm (\CE_L) : \Norm (\CE_{\ov L}) \CF_{\!K})_{\varphi_0}}
{(\CE_{\ov K} \Norm (\CE_L) : \Norm (\CE_L))_{\varphi_0}}
\times (\Norm (\CE_{\ov L}) \ov \CF_{\!K} : \Norm (\CE_{\ov L}))_{\varphi_0} \geq 1,
\end{equation*}

\noindent
since $\Norm (\CE_{\ov L}) \CF_{\!K} \subseteq \Norm (\CE_L)$.
Thus, inequalities $\order \CH^\ar_{K,\varphi} \leq (\CE_K : \CE_{\ov K} \CF_{\!K})_{\varphi_0}$ 
are equalities because of product formulas over $\varphi \mid \chi$
giving, as explained, $\order \CH^\ar_{K,\chi} = (\CE_K\! : \CE^\circ_K \, \CF_{\!K})$;
whence the theorem and, since $Z_{\varphi_0} = 1$, the supplementary information:
$$\Norm (\CE_{L,\varphi_0}) = \Norm (\CE_{\ov L,\varphi_0}) \CF_{\!K,\varphi_0} =
\Norm (\CE_{\ov L,\varphi_0} \CF_{\!L,\varphi_0}), $$ 
which generalizes the semi-simple case,
$\Norm (\CE_{L,\varphi}) = \Norm ( \CF_{\!L,\varphi})$ \cite[Theorem 4.6]{Gra2022},
and justifies that relation \eqref{f6} does not seem to depend on cyclotomic units, which 
may look mysterious but comes from the specific properties of $L/K$.
\end{proof}

\subsection{Numerical illustrations of capitulation phenomenon}\label{capitulations}
Let's consider, for $p=3$, $k = \Q(\sqrt{229})$ (class group of order $3$)
and the cubic field $K_0$ of conductor $37$ (trivial $3$-class group);
PARI \cite{Pari2016} gives for $K = k K_0$ the class group ${\sf [3,3,3]}$.
For $\ell = 109$, let $M_0$ be the cubic subfield of $\Q(\mu_{\ell})$; then 
$\ell$ is inert in $K$ and  the $3$-class group of $L= KM_0$ is ${\sf [9,9,3]}$. 

\smallskip
If $\BJ_{L/K}$ is injective, $\Nu_{L/K}(\CH_L) = \BJ_{L/K}(\CH_K) \simeq \CH_K$ 
and $\CH_L^G \simeq \CH_K$ (same order); 
the formula (e.g., \cite[\S\,2, (1), (2)]{Gra2022} or \cite{Gra2017})
of the orders of the elements of the filtration of $\CH_L$ gives
$\order (\CH_L/\CH_L^G )^G = \ds \frac{\order \CH_K}{\order \Norm_{L/K}(\CH_L^G)} 
= 3^3$, since $r=1$ and $\Norm_{L/K}(\CH_L^G) =  \CH_K^3 = 1$, giving 
$\order \CH_L \geq 3^6$ (absurd). 

\smallskip
So, $\BJ_{L/K}$ is not injective.
This gives an example where:
\begin{equation*}
\left \{\begin{aligned}
\order \CH^\alg_L & := \order \{x \in \CH_L,\  \Nu_{L/K}(x) = 1\} \in \{3^3,3^4,3^5\} \\
\order \CH^\ar_L& := \order \{x \in \CH_L,\  \Norm_{L/K}(x) = 1\} = 3^2;
\end{aligned}\right.
\end{equation*}
so $\order \CH^\alg_L \! \times \order \CH^\alg_K \in \{3^6, 3^7,3^8\}$, 
while $\order \CH^\ar_L\! \times \order \CH^\ar_K = 3^2\! \times 3^3 = \order \CH_L$
giving illustration of Corollary \ref{relation}.

\smallskip
For $k = \Q(\sqrt{1129})$, whose class group is ${\sf [9]}$ and for the cubic field $K_0$ 
of conductor $7$, the $3$-class group of $K$ is ${\sf [27]}$. For  $\ell = 19$, let $M_0$ 
be the subfield of degree $9$ of $\Q(\mu_{\ell})$ and $L=KM_0$; then the $3$-class group 
of the subfield $K_1 \subset L$, of degree $3$ over $K$, is ${\sf [27]}$ (stability from $K$). 
From  \cite[Theorem 3.5]{Gra2022} this implies $\order \CH_L = 27$, $\CH_L^G 
= \CH_L$, giving a partial capitulation since $\Nu_{L/K}(\CH_L) = \CH_L^9$ of order $3$. 
Then $\order \CH^\alg_L  = 3^2$ and $\order \CH^\ar_L = 1$.

\smallskip
Recall that a {\it sufficient} condition of capitulation of $\CH_K$ in $L$ is the stability 
of the $p$-class groups from some layer in the $p$-tower $L = \bigcup_{0 \leq i \leq N} 
K_i$, where $[K_i : K] = p^i$, $[L : K] = p^N$ much bigger than the exponent of 
$\CH_K$ \cite[\S\,3.2]{Gra2022}. 

\smallskip
A more general study in \cite[Theorems 1.1, 1.2]{Gra2023} shows that capitulation is, 
surprisingly, inversely proportional to the ``complexity'' of $\CH_L$ in a precise 
meaning. The fact that, in huge $p$-towers $L/K$, stability may occur at some layer, is a 
reasonable conjecture as a kind of finite Iwasawa theory for which the first case of 
Greenberg's conjecture \cite[Theorem 1, \S\,4]{Gree1976} would hold.

\section{Checking of equalities
\texorpdfstring{$\order \CH^\ar_{K,\varphi} \!=
\order(\CE_K / \CE^\circ_K\, \CF_K)_{\varphi_0}$}{Lg} in the non semi-simple 
case \texorpdfstring{$p=3$, $[K:\Q]=6$}{Lg}}\label{appendix}

\subsection{Introduction}
To illustrate the RAMC, the minimal non trivial context may be for cyclic cubic 
fields $k$ and degree $7$ cyclic extensions $K_0$ with $K=k K_0$. 
Then, the checking of RAMC would be to find $\ell \equiv 1 \pmod {7^N}$ 
then $M_0 \subset \Q(\mu_\ell^{})$, of degree $7^n$, $n \leq N$, and to work 
in the compositum $L = M_0K$ of absolute degree $21\times 7^n$; this is 
oversized for PARI computations; but, as shown by examples of \S\,\ref{capitulations}, 
capitulations may occur easily.

\smallskip
Nevertheless, since the existence of (infinitely many) extensions 
$L/K$ in which $\CH_K$ capitulates, is a conjecture largely tested 
in \cite{Gra2022,Gra2023},
we limit ourselves to an easier non semi-simple case, that is to say, 
$p=3$, $k$ real quadratic and $K_0/\Q$ cubic cyclic; then we only 
check the relation of $\order \CH^\ar_{K,\varphi}$ with
the index $(\CE_K : \CE^\circ_K\, \CF_K)_\varphi$.

Many cases occur for the checking of $(\CE_K : \CE^\circ_K \CF_{\!K})_{\varphi_0}
= \order \CH^\ar_{K,\varphi}$ (Theorem \ref{chiformulaH}) when the structure 
of the class groups varies as well as the relations $\eta_i = \prod_{j=1}^3 
\varepsilon_j^{a_{i,j}}$, $1 \leq i \leq 3$, describing the relative
cyclotomic units $\eta_i$ from the relative units $\varepsilon_j$
(relative units meaning of norm $1$ over $k$ and $K_0$). 
Once again, we will see that the two $\Z_p[\CG_K]$-modules, of same order, 
$(\CE_K / \CE^\circ_K \CF_{\!K})_{\varphi_0}$ and $\CH^\ar_{K,\varphi}$, are not 
isomorphic in general. The program computes $\BH_k$ and $\BH_K$; 
so $\order \CH^\ar_{K,\varphi} = \order \CH_K \times  \order \CH_k^{-1}$.

\subsection{PARI program -- Examples}
To simplify (especially, for the computation of the cyclotomic units), we take
$k = \Q(\sqrt f)$, where $f$ is a prime number congruent to $1$ modulo $4$
and various cubic fields $K_0$ of prime conductors $q \equiv 1 \pmod 3$. 

\smallskip
In the program, the main parameters are the following:

\smallskip
${\sf Pk, PK}$: defining polynomials of ${\sf k, K}$;

\smallskip
${\sf Ck, CK}$: class groups of ${\sf k, K}$;
${\sf Ek, EK}$: unit groups of ${\sf k, K}$;

\smallskip
${\sf LEtaK}$: list of the $3$ independent relative cyclotomic units of ${\sf K}$.

\smallskip
The conductor ${\sf f}$ varies in ${\sf [bf, Bf]}$; the prime ${\sf q}$ varies in 
${\sf [bq, Bq]}$; then, ${\sf kronecker(f,q)}$ gives the splitting of $q$ in $k/\Q$,
which may give larger $3$-class group for $K$ if ${\sf kronecker(f,q)=1}$.

\smallskip
Then the computations of some indices of groups of units are done taking
the logarithms of the units, to get easier linear relations. We do not write
the units and their logarithms since they
are oversized; but the running of the program gives rapidly a complete data
(use large precision and memory); for instance, with ${\sf f=257}$ and
${\sf q=4597}$ a precision ${\sf \backslash p \ 300}$ is necessary for the 
computation of some logarithms.

\smallskip
For some rare cases cases, the program gives $5$ fundamental relative 
units $\varepsilon_i$ (instead of $3$), but with two relations of dependence;
this is due to PARI computation of the fundamental units in ${\sf K.fu}$
where the units of the cubic field $K_0$ do not appear as a direct sum; a same remark
may occur for the fundamental unit $\varepsilon_0$ of the quadratic field $k$ (in general, 
$\varepsilon_0 = \varepsilon_1$, but for $k=\Q(\sqrt {1129})$ and $q=7$, $\varepsilon_0 = 
\varepsilon_1\varepsilon_2^{-1}\varepsilon_3^{-1}$. The final matrix $\big( a_{i,j}\big)_{i,j}$ 
is such that $\eta_i = \prod_{j=1}^3 \varepsilon_j^{a_{i,j}}$. 
The PARI program uses the package \cite{Pari2016}:

\medskip
\ft\begin{verbatim}
\p 150
{p=3;BHK=3;bf=229;Bf=10^3;bq=7;Bq=10^4;
forprime(f=bf,Bf,if(Mod(f,4)!=1,next);Pk=x^2-f;k=bnfinit(Pk,1);
Hk=k.no;if(valuation(Hk,p)==0,next);Ck=k.clgp;
print();print("conductor k=",f," Pk=",Pk," classgroup(k)=",Ck[2]);
Ek=k.fu;Ak=lift(Ek[1]);Bk=polcoeff(Ak,0)+polcoeff(Ak,1)*sqrt(f);
LEke=2*log(abs(Bk));print("log of epsilon0=",LEke);
forprime(q=bq,Bq,if(Mod(q,3)!=1,next);Q=polsubcyclo(q,p);
PK=polcompositum(Pk,Q)[1];F=q*f;xK=real(polroots(PK)[2]);
K=bnfinit(PK,1);HK=K.no;if(valuation(HK,3)<BHK,next);print();
print("nombre premier q=",q," PK=",PK," classgroup(K)=",K.clgp[2]);
print("kronecker(f,q)=",kronecker(f,q));
\\computation of the generator of order 2 of Gal(K/Q):
GK=nfgaloisconj(K);Id=x;for(k=1,6,Z=GK[k];ks=1;while(Z!=Id,
Z=nfgaloisapply(K,GK[k],Z);ks=ks+1);if(ks==2,S2=GK[k];break));
\\computation of the relative units (of norm 1 over K0)
EKe=List;EK=K.fu;for(n=1,5,e=EK[n];
ee=nfgaloisapply(K,S2,e);e=e*ee^-1;listput(EKe,e));
LEKe=List;for(n=1,5,AK=lift(EKe[n]);BK=0;cK=0;
for(m=0,5,c=polcoeff(AK,m);BK=BK+xK^m*c);
listput(LEKe,log(abs(BK))));print();Ce=0;LEK=List;for(n=1,5,
e=LEKe[n];if(abs(abs(e))>10^-10,Ce=Ce+1;listput(LEK,e)));
print("logarithms of the relative units of K:",LEK);
g=lift(znprimroot(f));G=lift(znprimroot(q));
u=lift(Mod((1-g)/f,q));v=lift(Mod((1-G)/q,f));
g=g+u*f;G=G+v*q;g2=g^2;G3=G^3;d2=(f-1)/2;d3=(q-1)/3;
\\Computation of the Artin group A=Gal(Q(exp(I*Pi/F))/K):
z=exp(I*Pi/F);A=List;for(i=1,d2,for(j=1,d3/2,
a=Mod(g2,F)^i*Mod(G3,F)^j;a=lift(a);listput(A,a)));dA=d2*d3/2;
\\Computation of the relative cyclotomic units:
LEtaK=List;C=List;for(i=1,3,for(j=1,2,c=1;for(t=1,dA,a=A[t];
s=lift(Mod(a*G^i*g^j,F));c=c*(z^s-z^-s));c=real(c);listput(C,c)));
listput(LEtaK,log(abs(C[1]*C[2]^-1)));
listput(LEtaK,log(abs(C[3]*C[4]^-1)));
listput(LEtaK,log(abs(C[5]*C[6]^-1)));
print("Cyclotomic units:",LEtaK);if(Ce==5,print("Ce=5");next);
\\Matrix of relations:
BX=12;for(j=1,3,for(a=-BX,BX,for(b=-BX,BX,for(c=-BX,BX,
X=a*LEK[1]+b*LEK[2]+c*LEK[3];
if(abs(X-LEtaK[j])<10^-6,print(a," ",b," ",c))))))))}
\end{verbatim}\ns

\subsubsection{Quadratic field \texorpdfstring{$k$}{Lg} of conductor $229$}
${}$
\ft\begin{verbatim}
Pk=x^2-229   classgroup(k)=[3]
log(epsilon0)=-5.424930610368687949361759021212260139798716...
\end{verbatim}\ns
\ft\begin{verbatim}
(i) nombre premier q=37
PK=x^6+2*x^5-710*x^4-918*x^3+157031*x^2+125228*x-10725781
classgroup(K)=[3,3,3]  kronecker(f,q)=1  epsilon0=epsilon1
0  3  0
0  0 -3
0 -3  3
\end{verbatim}\ns

\smallskip\noindent
In the following case, the program gives $5$ relative units $\varepsilon'_i$
with the relations $\varepsilon'_3=\varepsilon'_2$ and $\varepsilon'_5=
\varepsilon'_2\cdot \varepsilon'^{-1}_4$, giving
the basis $\varepsilon_1= \varepsilon'_1$, $\varepsilon_2= \varepsilon'_2$,
$\varepsilon_3= \varepsilon'_4$:

\ft\begin{verbatim}
(ii) prime number q=1723 
PK=x^6+2*x^5-1834*x^4-7552*x^3+480853*x^2-252370*x-20931500
classgroup(K)=[9,3]  kronecker(f,q)=-1  epsilon0=epsilon1
1  0 -3
1  3  0
1 -3  3
\end{verbatim}\ns
\ft\begin{verbatim}
(iii) prime number q=5743
PK=x^6+2*x^5-4514*x^4-54942*x^3+3770063*x^2+62574440*x-31665625
classgroup(K)=[9,9]  kronecker(f,q)=-1  epsilon0=epsilon1
-2 -6 -3
 1  3 -3
 4  3  6
\end{verbatim}\ns
\ft\begin{verbatim}
(iv) prime number q=6379 
PK=x^6+2*x^5-4938*x^4+18458*x^3+4700367*x^2-32919224*x-679071697
classgroup(K)=[9,3,3]  kronecker(f,q)=1  epsilon0=epsilon1
-3  6  3
 3 -3 -6
 0 -3  3
\end{verbatim}\ns

\subsubsection{Quadratic field \texorpdfstring{$k$}{Lg} of conductor 
\texorpdfstring{$1129$}{Lg}}
${}$
\ft\begin{verbatim}
Pk=x^2-1129   class group(k)=[9]
log(epsilon0)=11.634240035110537169057276239224439429816675...
\end{verbatim}\ns

\noindent
In the next example, the fundamental unit $\varepsilon_0$ of $k$ is not $\varepsilon_1$
given by PARI; one gets $\varepsilon_0 = \varepsilon_1 \varepsilon_2^{-1}\varepsilon_3^{-1}$:

\ft\begin{verbatim}
(v) prime number q=7
PK=x^6+2*x^5-3390*x^4-4522*x^3+3821667*x^2+2547028*x-1432703257
classgroup(K)=[27]  kronecker(f,q)=1  epsilon0=epsilon1/epsilon2epsilon3
-1 -1  0
 0  1 -1
 1  0  1
\end{verbatim}\ns
\ft\begin{verbatim}
(vi) prime number q=73 
PK=x^6+2*x^5-3434*x^4-4618*x^3+3822187*x^2+2421872*x-1377322821
classgroup(K)=[189,3]  kronecker(f,q)=-1  epsilon0=epsilon1
 8 -6  9
 2 -3 -6
-1  9 -3
\end{verbatim}\ns
\ft\begin{verbatim}
(vii) nombre premier q=19867 
PK=x^6+2*x^5-16630*x^4-200242*x^3+47490067*x^2+607831028*x-25945199777 
class group K=[18,18,9]  kronecker(f,q)=-1  epsilon0=epsilon1
0  0 -9
9  9  9
0 -9  0
\end{verbatim}\ns

\noindent
In the following case, the program gives $5$ relative units $\varepsilon'_i$
with the relations $\varepsilon'_3=\varepsilon'_2$ and $\varepsilon'_5=
\varepsilon'_4\cdot \varepsilon'^{-1}_1 \cdot \varepsilon'_2$, giving
the basis $\varepsilon_1= \varepsilon'_1$, $\varepsilon_2= \varepsilon'_2$,
$\varepsilon_3= \varepsilon'_4$.

\ft\begin{verbatim}
(viii) prime number q=41077 
PK=x^6+2*x^5-30770*x^4+156750*x^3+191481179*x^2-1910572432*x-169077439885 
class group K=[27,9,3]  kronecker(f,q)=-1  epsilon0=epsilon1
-3 -9  9
 6  9  0
 6  0 -9
\end{verbatim}\ns

\subsection{Verification of the relation \texorpdfstring{$\order \CH^\ar_{K,\varphi} = 
(\CE_K : \CE^\circ_K \CF_{\!K})$}{Lg}.}

${}$

\smallskip\noindent
{\bf (a) Quadratic field $k$ of conductor $229$ ($\CH_k \simeq \Z/3\Z$):}

\medskip\noindent
{\bf Case (i):} ${\sf CH_K = [3,3,3]}$ and ${\sf CH_{K,\varphi} = [3,3]}$.
Since $\varepsilon_1 = \varepsilon_0$, the unit index is
\ft $\Big(\langle \varepsilon_1,\varepsilon_2,\varepsilon_3 \rangle :
\langle \varepsilon_1,\varepsilon_2^3,\varepsilon_3^{-3},
\varepsilon_2^{-3}\varepsilon_3^3 \rangle \Big)
= \Big(\langle \varepsilon_1, \varepsilon_2,\varepsilon_3 \rangle :
\langle \varepsilon_1,\varepsilon_2^3,\varepsilon_3^3 \rangle \Big) = 9.$\ns
 
The quotient $\CE_{K,\varphi} / \CE^\circ_{K,\varphi} \CF_{\!K,\varphi}$ being monogenic,
is isomorphic to $\Z/9 \Z$, wile $\CH_{K,\varphi} \simeq \Z/3\Z \times \Z/3\Z$.

\smallskip\noindent
{\bf Case (ii):} ${\sf CH_K = [9,3]}$ and 
${\sf CH_{K,\varphi} = [3,3]}$ or ${\sf [9]}$. Once we are using
the basis $\varepsilon_1= \varepsilon'_1$, $\varepsilon_2= \varepsilon'_2$,
$\varepsilon_3= \varepsilon'_4$, the matrix is analogous to the previous one, 
whence the result with the index $9$.

\smallskip\noindent
{\bf Case (iii):} ${\sf CH_K = [9,9]}$ and ${\sf CH_{K,\varphi} = [9,3]}$.
The unit index is:
\ft\begin{equation*}
\begin{aligned}
& \Big(\langle \varepsilon_1,\varepsilon_2,\varepsilon_3 \rangle : 
\langle \varepsilon_1, \varepsilon_1^{-2} \varepsilon_2^{-6}\varepsilon_3^{-3},
\varepsilon_1 \varepsilon_2^3 \varepsilon_3^{-3},
\varepsilon_1^4\varepsilon_2^{3}\varepsilon_3^6 \rangle \Big)= \\
& \Big(\langle \varepsilon_1, \varepsilon_2, \varepsilon_3 \rangle :
\langle \varepsilon_1,\varepsilon_2^3 \varepsilon_3^{-3}, 
\varepsilon_2^9, \varepsilon_3^9 \rangle \Big) 
=  \Big(\langle \varepsilon_1, \varepsilon_2, \varepsilon_3 \rangle :
\langle \varepsilon_1,\varepsilon_2^3 \varepsilon_3^{-3}, 
\varepsilon_2^9 \rangle \Big)= 27.
\end{aligned}
\end{equation*}\ns

\smallskip\noindent
{\bf Case (iv):} ${\sf CH_K = [9,3,3]}$ and ${\sf CH_{K,\varphi} = [9,3]}$ or ${\sf [3,3,3]}$:
\ft\begin{equation*}
\begin{aligned}
& \Big(\langle \varepsilon_1,\varepsilon_2,\varepsilon_3 \rangle : 
\langle \varepsilon_1, \varepsilon_1^{-3} \varepsilon_2^{6}\varepsilon_3^{3},
\varepsilon_1^{3} \varepsilon_2^{-3} \varepsilon_3^{-6},
\varepsilon_2^{-3}\varepsilon_3^3 \rangle \Big)= \\
 &  \Big(\langle \varepsilon_1, \varepsilon_2, \varepsilon_3 \rangle :
\langle \varepsilon_1,\varepsilon_2^{-3} \varepsilon_3^{3}, 
\varepsilon_2^9, \varepsilon_3^9 \rangle \Big) 
=  \Big(\langle \varepsilon_1, \varepsilon_2, \varepsilon_3 \rangle :
\langle \varepsilon_1,\varepsilon_2^{-3} \varepsilon_3^{3}, 
\varepsilon_2^9 \rangle \Big)= 27.
\end{aligned}
\end{equation*}\ns

\medskip\noindent
{\bf (b) Quadratic field $k$ of conductor $1129$ ($\CH_k \simeq \Z/9\Z$):}

\medskip\noindent
{\bf Case (v):} ${\sf CH_K = [27]}$ and ${\sf CH_{K,\varphi} = [3]}$.
We have  $\varepsilon_0 = \varepsilon_1 \varepsilon_2^{-1} \varepsilon_3^{-1}$:
\ft\begin{equation*}
\begin{aligned}
&\Big(\langle 
\varepsilon_1 \varepsilon_2^{-1} \varepsilon_3^{-1},\varepsilon_2,\varepsilon_3 \rangle : 
\langle \varepsilon_1 \varepsilon_2^{-1} \varepsilon_3^{-1},
\varepsilon_1^{-1} \varepsilon_2^{-1},
\varepsilon_2 \varepsilon_3^{-1},
\varepsilon_1 \varepsilon_3 \rangle \Big)= \\
& \Big(\langle \varepsilon_1 \varepsilon_2^{-1} \varepsilon_3^{-1},\varepsilon_2,\varepsilon_3 \rangle :
\langle \varepsilon_1 \varepsilon_2^{-1} \varepsilon_3^{-1},
\varepsilon_2 \varepsilon_3^{-1}, 
\varepsilon_1^{-1} \varepsilon_2^{-1}, \varepsilon_1 \varepsilon_3\rangle \Big) 
=  \Big(\langle \varepsilon_1 \varepsilon_2^{-1} \varepsilon_3^{-1},
\varepsilon_1 \varepsilon_2,\varepsilon_1 \varepsilon_3 \rangle \Big)= 3.
\end{aligned}
\end{equation*}\ns

\smallskip\noindent
{\bf Case (vi):} ${\sf CH_K = [27,3]}$ and ${\sf CH_{K,\varphi} = [3,3]}$:
\ft\begin{equation*}
\begin{aligned}
& \Big(\langle \varepsilon_1,\varepsilon_2,\varepsilon_3 \rangle : 
\langle \varepsilon_1, \varepsilon_1^{8} \varepsilon_2^{-6}\varepsilon_3^{9},
\varepsilon_1^{2} \varepsilon_2^{-3} \varepsilon_3^{-6},
\varepsilon_1^{-1}\varepsilon_2^{9}\varepsilon_3^{-3} \rangle \Big)= \\
& \Big(\langle \varepsilon_1, \varepsilon_2, \varepsilon_3 \rangle :
\langle \varepsilon_1, \varepsilon_2^{-6} \varepsilon_3^{9}, 
\varepsilon_2^{-3} \varepsilon_3^{-6},\varepsilon_2^9 \varepsilon_3^{-3} \rangle \Big) 
=  \Big(\langle \varepsilon_1, \varepsilon_2, \varepsilon_3 \rangle :
\langle \varepsilon_1, \varepsilon_2^3 \varepsilon_3^6,
\varepsilon_2^9 \varepsilon_3^{-3} \rangle \Big) = 9.
\end{aligned}
\end{equation*}\ns

\smallskip\noindent
{\bf Case (vii):} ${\sf CH_K = [9,9,9]}$ and ${\sf CH_{K,\varphi} = [9,9]}$:
\ft\begin{equation*}
 \Big(\langle \varepsilon_1,\varepsilon_2,\varepsilon_3 \rangle : 
\langle \varepsilon_1, \varepsilon_3^{-9},
\varepsilon_1^{9} \varepsilon_2^{9}\varepsilon_3^{9},
\varepsilon_2^{-9} \rangle \Big)= 
 \Big(\langle \varepsilon_1, \varepsilon_2, \varepsilon_3 \rangle :
\langle \varepsilon_1, \varepsilon_2^{9}, \varepsilon_3^{9} \rangle \Big) = 81.
\end{equation*}\ns

\smallskip\noindent
{\bf Case (viii):} ${\sf CH_K = [27,9,3]}$ and ${\sf CH_{K,\varphi} = [27,3]}$ or ${\sf [3,9,3]}$:
\ft\begin{equation*}
\begin{aligned}
& \Big(\langle \varepsilon_1,\varepsilon_2,\varepsilon_3 \rangle : 
\langle \varepsilon_1, \varepsilon_1^{-3} \varepsilon_2^{-9}\varepsilon_3^{9},
\varepsilon_1^{6} \varepsilon_2^{9},
\varepsilon_1^{6} \varepsilon_3^{-9} \rangle \Big)= \\
& \Big(\langle \varepsilon_1, \varepsilon_2, \varepsilon_3 \rangle :
\langle \varepsilon_1, \varepsilon_2^{-9} \varepsilon_3^{9}, 
\varepsilon_2^{9}, \varepsilon_3^{9} \rangle \Big) 
=  \Big(\langle \varepsilon_1, \varepsilon_2, \varepsilon_3 \rangle :
\langle \varepsilon_1, \varepsilon_2^9, \varepsilon_3^9 \rangle \Big) = 81.
\end{aligned}
\end{equation*}\ns

\section{Conclusion}
It is not possible, with PARI, to check the RAMC with non-trivial examples such as $k$ 
cubic, $[K_0 : \Q] = 7$, with $7$-extensions $L/K$ of degree $7^N$ in which 
$\CH_K$ capitulates; nevertheless, we have given examples of non-trivial 
capitulations for $k$ quadratic, $[K_0 : \Q] = 3$ and cubic extensions $L/K$, which
may be sufficient to suggest that the method applies in any circumstance. 
Indeed, we think that the existence of capitulation fields $L \subset K(\mu_\ell)$, with
$\ell \equiv 1 \pmod {2p^N}$ inert in $K$, is a governing conjecture for  many 
arithmetical properties; even if its proof seems out of reach, it has been checked 
on several other occasions, and may be considered as a kind of ``basic conjecture'' for 
class field theory, that is to say a deep diophantine property, predominant 
in a logical point of view.

\end{document}